\numberwithin{equation}{section}
\newcommand{\hb}{\hat{\beta}_n}
\newcommand{\hoMLE}{\hat{\lambda}_n}
\def\eqd{\,{\buildrel d \over =}\,}
\def\law{\xrightarrow{d}}
\def\hlr{\hat{\lambda}^R}
\def\hll{\hat{\lambda}^L}
\def\hlo{\hat{\lambda}^0}
\def\R{\mathbbm{R}}
\def\hl{\hat{\lambda}}
\newtheorem{theorem}{THEOREM}[section]
\newtheorem{lemma}{LEMMA}[section]
\newtheorem{remark}{REMARK}[section]
\def\argmin{\mathop{\text{\sl\em argmin}}}
\def\argmax{\mathop{\text{\sl\em argmax}}}
\renewcommand{\O}{\ensuremath{{\cal O}}}
\begin{document}

\title{A likelihood ratio test for monotone baseline hazard functions in the Cox model}
\author {Gabriela F.~Nane\\[3mm]
Delft Institute of Applied Mathematics, Delft University of Technology}
\date{\today}
\maketitle

\begin{abstract}
We consider a likelihood ratio method for testing whether a monotone baseline hazard function in the Cox model has a particular value at a fixed point. The characterization of the estimators involved is provided both in the nondecreasing and the nonincreasing setting. These characterizations facilitate the derivation of the asymptotic distribution of the likelihood ratio test, which is identical in the nondecreasing and in the nonincreasing case. The asymptotic distribution of the likelihood ratio test enables, via inversion, the construction of pointwise confidence intervals. Simulations show that these confidence intervals exhibit comparable coverage probabilities with the confidence intervals based on the asymptotic distribution of the nonparametric maximum likelihood estimator of a monotone baseline hazard function.

\bigskip

\noindent
\emph{Keywords:} Cox model, Likelihood ratio test, Nonparametric maximum likelihood estimation, Shape constrained estimators.

\bigskip
\end{abstract}

\section{Introduction}
\noindent

In survival analysis, using Cox proportional hazards model~\cite{cox:1972} is the typical choice to account for the effect of covariates on the lifetime distribution. Its attractiveness resides in its form, that allows for efficient estimation of the regression coefficient, while leaving the baseline distribution completely unspecified, see e.g.,~\cite{efron:1977,oakes:1977,slud:1982}. The regression coefficient estimator is the well-known maximum partial likelihood estimator~\cite{cox:1972,cox:1975}.  As a response to Cox's paper, Breslow~\cite{cox:1972} proposed a different approach, that yields the same maximum partial likelihood estimator, along with an estimator for the baseline cumulative hazard function $\Lambda_0$. Impressive amount of research rapidly followed Cox's seminal paper, which primarily focused on deriving the (asymptotic) properties of the maximum partial likelihood estimator of the regression coefficient $\hb$, as well as of the Breslow estimator $\Lambda_n$ of the baseline cumulative hazard function.

Even though the baseline hazard $\lambda_0$ can be left completely unspecified, in practice, one might be interested in restricting $\lambda_0$ qualitatively. This can be done by assuming the baseline hazard to be monotone, for example, as suggested by Cox himself~\cite{cox:1972}. Various studies have indicated that a monotonicity constraint should be imposed on the baseline hazard, which complies with the medical expertise. For an illustration of a nonincreasing baseline hazard estimator in the study of patients with acute coronary syndrome, see~\cite{nanetal:2012}. Lopuha\"a and Nane~\cite{lopuhaa_nane1} propose a nonparametric maximum likelihood estimator and a Grenander type estimator for estimating a monotone baseline hazard function. The Grenander type estimator is defined in terms of slopes of the greatest convex minorant of the Breslow estimator $\Lambda_n$. The two estimators have been proven to be strongly consistent and have been shown to exhibit the same distributional law. Furthermore, at a fixed point $x_0$, the scaled difference between the maximum likelihood estimator $\hoMLE(x_0)$ and the true baseline hazard $\lambda_0(x_0)$ converges in distribution to the distribution of the minimum of two-sided Brownian motion plus a parabola times a constant depending on the underlying parameters. These results adhere to the general nonparametric shape constrained theory, and, in particular, prolong naturally the findings in the case of the random censorship model with no covariates~\cite{huangwellner:1995}.

Ensuing inference will be pursued in this paper, by testing the hypothesis that the underlying monotone baseline hazard has a particular value $\theta_0$, at a fixed point $x_0$. We will use a likelihood ratio test of $H_0: \lambda_0(x_0)=\theta_0$ versus $H_1: \lambda_0(x_0)\neq \theta_0$. Within the shape restricted problems, this approach was initially employed for monotone distributions in the current status model by Banerjee and Wellner~\cite{banerjeewellner:2001}. The authors focused on deriving the limiting distribution of the likelihood ratio test under the null hypothesis, and to obtain what the authors referred to a fixed universal distribution, defined in terms of slopes of the greatest convex minorant of the two-sided Brownian motion  plus a parabola. These findings were followed by a rapid stream of research, see, e.g.,~\cite{banerjeewellner:2005,banerjee:2007,banerjee:2008}, that revealed that the likelihood ratio method could be extended straightforwardly in other shape constrained settings. In this paper, we carry on this research for the monotone baseline hazard function in the Cox model. In addition to extending directly the results in the right censoring model with no covariates in~\cite{banerjee:2008}, we aim to provide a thorough description of the method and detailed proofs for all results.

Furthermore, we will derive confidence sets for $\lambda_0(x_0)$, based on the likelihood ratio method. More specifically, we will use that inverting the family of tests can yield, in turn, pointwise confidence intervals for the baseline hazard function. A more direct method of constructing pointwise confidence intervals is based on the asymptotic distribution, at a fixed point $x_0$, of the nonparametric maximum likelihood estimator $\hoMLE(x_0)$, derived in~\cite{lopuhaa_nane1}. Nonetheless, this entails the bothersome issue of estimating the nuisance parameter, and more specifically, estimating the derivative of the baseline hazard function $\lambda'(x_0)$, since, to the author's best knowledge, there is no available smooth monotone estimator of the baseline hazard function in the Cox model. One option would be to kernel smooth the NPMLE $\hoMLE$, but this would pose extra difficulties, like an appropriate choice of a bandwidth. For a discussion of this issues in the case of right-censoring with no covariates, see~\cite{banerjee:2008}.

The paper is organized as follows. Section~\ref{sec:def} introduces the Cox model, the notations and the common assumptions. In Section~\ref{sec:loglik}, we introduce the likelihood ratio method and characterize the maximum likelihood estimator $\hoMLE$ of a monotone baseline hazard function and the estimator $\hoMLE^0$, for which $\hoMLE^0(x_0)=\theta_0$, for a fixed $x_0$ in the interior of the support of the baseline distribution. We provide the characterization of the two estimators in the case of both nondecreasing and nonincreasing baseline hazard functions $\lambda_0$. The asymptotic distribution of the likelihood ratio statistic is provided, along with preparatory lemmas, in Section~\ref{sec:limit_distr}. Finally, Section~\ref{sec:simulations} is devoted to constructing pointwise confidence intervals and comparing them, via simulations, with the conventional confidence intervals based on the asymptotic distribution of the NPMLE $\hoMLE$.\\


\section{Definitions and assumptions}
\label{sec:def}
\noindent
Suppose that the observed data consist of the following independent and identically distributed triplets $(T_i,\Delta_i,Z_i)$, with $i=1,\ldots,n$. The event time, denoted by~$X$ and commonly referred to as the survival time is subject to random censoring. Thus, $T=\min(X,C)$, where~$T$ is the follow-up time and~$C$ denotes the censoring time. The indicator~$\Delta=\{X\leq C\}$ marks whether the follow-up time is an event or a censoring time. Finally,~$Z\in\R^p$ denotes the covariate vector of the observed follow-up time $T$, which is assumed to be time invariant. The event time~$X$ and censoring time~$C$ are assumed to be conditionally independent, given the covariate vector~$Z$. Furthermore, let~$F$ be the distribution function of the non-negative random variable~$X$,~$G$ the distribution function of the non-negative random variable~$C$, and~$H$ the distribution function of~$T$. The distribution function~$F(x|z)$ is assumed to be absolutely continuous, with density~$f(x|z)$. Similarly, the distribution function $G(c|z)$ is assumed to be absolutely continuous, with density~$g(c|z)$. In addition,~$F(x|z)$ and~$G(c|z)$ share no parameters, thus the censoring mechanism is assumed to be non-informative.

Let $\lambda(x|z)$ be the hazard function for an individual with covariate vector $z\in\R^p$. The Cox model~\cite{cox:1972} specifies that

\begin{equation}
\label{def:cox model}
\lambda\left(x|z\right)=\lambda_0(x)\,\text{e}^{\beta_0' z},
\end{equation}
where $\lambda_0$ represents the baseline hazard function, that corresponds to $z=0$, and $\beta_0\in\R^p$
is the vector of the underlying regression coefficients. Finally, we consider the following assumptions, that are typically employed when deriving large sample properties of estimators within the Cox model (e.g., see~\cite{tsiatis:1981}).
\begin{description}
\item (A1)
Let $\tau_F,\tau_G$ and $\tau_H$ be the end points of the support of $F,G$ and $H$ respectively.
Then
\[
\tau_{H}=\tau_G<\tau_F\leq \infty.
\]
\item (A2)
There exists $\varepsilon>0$ such that
\[
\sup_{|\beta-\beta_0 |\leq \varepsilon} \mathbbm{E}\left[ |Z|^2\, \text{e}^{2\beta' Z}\right]<\infty,
\]
where $|\cdot|$ denotes the Euclidean norm.
\end{description}





\section{The likelihood ratio method and the characterization of the estimators}
\label{sec:loglik}
\noindent
By definition, $\Lambda(x|z)=-\log (1-F(x|z))$ is the cumulative hazard function.
Thus, from~\eqref{def:cox model}, it follows that~$\Lambda(x|z)=\Lambda_0(x)\exp(\beta_0'z)$, where~$\Lambda_0(x)=\int_0^x \lambda_0(u)\,\mathrm{d}u$ is the
baseline cumulative hazard function. Since, for a continuous distribution, $\lambda(t)=f(t)/(1-F(t))$, for $t\geq0$, the full likelihood is given by

\[
\begin{split}
&
\prod_{i=1}^n
\left\{f(T_i\mid Z_i)\left[1-G(T_i\mid Z_i)\right]\right\}^{\Delta_i}
\left\{g(T_i\mid Z_i)\left[1-F(T_i\mid Z_i)\right]\right\}^{1-\Delta_i}\\
&=
\prod_{i=1}^n
\lambda(T_i\mid Z_i)^{\Delta_i}\exp\left[-\Lambda(T_i\mid Z_i)\right]
\times
\prod_{i=1}^n
\left[1-G(T_i\mid Z_i)\right]^{\Delta_i}
g(T_i\mid Z_i)^{1-\Delta_i}.
\end{split}
\]
As the censoring mechanism is assumed to be non-informative, and by~\eqref{def:cox model}, maximizing the full likelihood is the same as maximizing

\[
\prod_{i=1}^n
\lambda(T_i\mid Z_i)^{\Delta_i}\exp\left[-\Lambda(T_i\mid Z_i)\right]
=
\prod_{i=1}^n
\left[\lambda_0(T_i)\mathrm{e}^{\beta_0'Z_i}\right]^{\Delta_i}
\exp\left[-\mathrm{e}^{\beta_0'Z_i}\Lambda_0(T_i)\right],
\]
which yields the following (pseudo) loglikelihood function, written as function of $\beta\in\mathbbm{R}^p$ and $\lambda_0$

\[
\sum_{i=1}^n\left[
\Delta_{i}\log\lambda_0(T_{i}) +\Delta_{i}\beta' Z_{i}-\text{e}^{\beta' Z_{i}}\Lambda_0(T_{i})\right].
\]
Let $T_{(1)}<T_{(2)}<\cdots<T_{(n)}$ be the ordered follow-up times and,
for~$i=1,\ldots,n$, let $\Delta_{(i)}$ and~$Z_{(i)}$ be the censoring indicator and covariate vector corresponding to $T_{(i)}$. Writing the above (pseudo) likelihood as a function of $\beta$ and $\lambda_0$ gives

\begin{equation}
\label{likelihood}
L_{\beta}(\lambda_0)=\sum_{i=1}^n\left[
\Delta_{(i)}\log\lambda_0(T_{(i)}) +\Delta_{(i)}\beta' Z_{(i)}-\text{e}^{\beta' Z_{(i)}}\int_0^{T_{(i)}}\lambda_0(u)\,\mathrm{d}u
\right].
\end{equation}
Following the approach in~\cite{lopuhaa_nane1}, we do not proceed with the joint maximization of \eqref{likelihood} over~$\beta$ and monotone~$\lambda_0$. Alternatively, for $\beta\in\R^p$ fixed, we consider maximum likelihood estimation of a monotone baseline hazard function $\lambda_0$ and denote the estimator by $\hoMLE(x;\beta)$. Afterwards, we simply replace~$\beta$ by~$\hb$, the maximum partial likelihood estimator ( see, e.g.,~\cite{cox:1972,cox:1975}) of the underlying regression coefficients~$\beta_0$, due to its commendable asymptotic properties (see, e.g.,~\cite{efron:1977,oakes:1977,slud:1982}). The proposed NPMLE is thus $\hoMLE(x)=\hoMLE(x;\hb)$ and will be referred to as the unconstrained estimator of a monotone $\lambda_0$. Furthermore, for $\beta\in\R^p$ fixed, we maximize the loglikelihood function $L_{\beta}(\lambda_0)$ in~\eqref{likelihood} over the class of all monotone baseline hazard functions, under the null hypothesis $H_0:\lambda_0(x_0)=\theta_0$, for $x_0\in(0,\tau_H)$ and $\theta_0\in(0,\infty)$, fixed. We obtain $\hoMLE^0(x;\beta)$ and hence propose $\hoMLE^0(x)=\hoMLE^0(x;\hb)$ as the constrained NPMLE.

Replacing $\beta$ by $\hb$ also in the loglikelihood function~\eqref{likelihood} yields the likelihood ratio statistic for testing $H_0:\lambda_0(x_0)=\theta_0$,

\begin{equation}
\label{likelihood_ratio}
2\log \xi_n(\theta_0)=2 L_{\hb}(\hoMLE)-2 L_{\hb}(\hoMLE^0).
\end{equation}
Thus, for computing the likelihood ratio statistic, we need to characterize the unconstrained NPMLE $\hoMLE(x)$ and the constrained NPMLE  $\hoMLE^0(x)$ of a monotone baseline hazard function $\lambda_0$.

\subsection{Nondecreasing baseline hazard}
\label{sec:nondecreasing}
We consider first maximum likelihood estimation of a nondecreasing baseline hazard function $\lambda_0$. Both the unconstrained estimator $\hoMLE$ and the constrained estimator $\hoMLE^0$ will be characterized in terms of the processes

\begin{equation}
\label{def:W_n}
W_n(\beta,x)
=
\int \left( \text{e}^{\beta' z} \int_0^x \{ u\geq s \}\,\mathrm{d}s \right)\, \mathrm{d}P_n(u,\delta,z),
\end{equation}
and
\begin{equation}
\label{def:V_n}
V_n(x)
=
\int
\delta\{u< x\}\,\mathrm{d}P_n(u,\delta,z),
\end{equation}
with $\beta\in\R^p$ and $x\geq 0$, and where $P_n$ is the empirical measure of
the $(T_i,\Delta_i,Z_i)$, with $i=1,\ldots,n$. The characterization of the unconstrained estimator $\hoMLE(x;\beta)$ has already been provided in Lemma 1 in~\cite{lopuhaa_nane1}, which we restate below. Furthermore, we provide a closed form of the estimator on blocks of indices on which the estimator is constant. This expression will be useful in deriving the asymptotic distribution of the likelihood ratio statistic.

\begin{lemma}
\label{lemma:characterization_unconstrained}
Let $T_{(1)}<\ldots<T_{(n)}$ be the ordered follow-up times and consider a fixed $\beta\in\R^p$.
\begin{enumerate}[(i)]
\item
Let $W_n$ and $V_n$ defined in~\eqref{def:W_n} and~\eqref{def:V_n}.
Then, the NPMLE $\hoMLE(x;\beta)$ of a nondecreasing baseline hazard function $\lambda_0$ is of the form
\[
\hoMLE(x;\beta)
=
\begin{cases}
0 & x < T_{(1)},\\
\hl_i & T_{(i)}\leq x < T_{(i+1)}, \text{ for }i=1,\ldots,n-1,\\
\infty & x\geq T_{(n)},
\end{cases}
\]
where $\hat{\lambda}_i$ is the left derivative of the greatest convex minorant (GCM) at the point $P_i$
of the cumulative sum diagram (CSD) consisting of the points

\begin{equation}
\label{def:points_Pj}
P_j=\Big( W_n(\beta,T_{(j+1)})-W_n(\beta,T_{(1)}), V_n(T_{(j+1)}) \Big),
\end{equation}
for $j=1,\ldots,n-1$ and $P_0=(0,0)$.
\item
For $k\geq 1$, let $B_1,\ldots,B_k$ be blocks of indices such that $\hoMLE(x;\beta)$ is constant on each block and $B_1\cup \ldots\cup B_k=\{1,\ldots,n-1\}$. Denote by $v_{nj}(\beta)$ the value of $\hoMLE(x;\beta)$ on block $B_j$. Then,

\begin{equation}
\label{def:v_nj}
v_{nj}(\beta)=\frac{\sum_{i\in B_j}\Delta_{(i)}}{\sum_{i\in B_j}\left[T_{(i+1)}-T_{(i)}\right]\sum_{l=i+1}^{n}\text{e}^{\beta'Z_{(l)}}}.
\end{equation}
\end{enumerate}
\end{lemma}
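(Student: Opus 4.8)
The plan is to take part (i) as given (it is quoted from Lemma 1 of \cite{lopuhaa_nane1}) and concentrate on deriving the closed form \eqref{def:v_nj} for the value of the estimator on a block of indices. The starting point is the isotonic-regression interpretation from part (i): $\hoMLE(\cdot;\beta)$ is the left derivative of the greatest convex minorant of the cumulative sum diagram through the points $P_j$ in \eqref{def:points_Pj}. A standard fact from the theory of the GCM (the ``min-max'' or averaging formula for the slopes of the greatest convex minorant, see e.g.\ the max-min characterization of isotonic regression) is that when the GCM is linear across a block of consecutive points $P_i$, $i\in B_j$, its common slope there equals the slope of the chord joining the two endpoints of the block, i.e.\ the ratio of the total increment of the $V_n$-coordinate to the total increment of the $W_n$-coordinate over that block. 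So the first step is simply to write
\[
v_{nj}(\beta)=\frac{\sum_{i\in B_j}\bigl[V_n(T_{(i+1)})-V_n(T_{(i)})\bigr]}{\sum_{i\in B_j}\bigl[W_n(\beta,T_{(i+1)})-W_n(\beta,T_{(i)})\bigr]},
\]
which telescopes to the chord slope over the block.

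The second step is to evaluate the two telescoped increments explicitly in terms of the data. For the numerator, from \eqref{def:V_n} one has $V_n(x)=n^{-1}\sum_{l=1}^n \Delta_{(l)}\{T_{(l)}<x\}$, hence $V_n(T_{(i+1)})-V_n(T_{(i)})=n^{-1}\Delta_{(i)}$, and summing over $i\in B_j$ gives $n^{-1}\sum_{i\in B_j}\Delta_{(i)}$. For the denominator, from \eqref{def:W_n}, $W_n(\beta,x)=n^{-1}\sum_{l=1}^n \mathrm{e}^{\beta'Z_{(l)}}\int_0^x\{T_{(l)}\geq s\}\,\mathrm{d}s = n^{-1}\sum_{l=1}^n \mathrm{e}^{\beta'Z_{(l)}}\min(x,T_{(l)})$, so that
\[
W_n(\beta,T_{(i+1)})-W_n(\beta,T_{(i)})
=\frac1n\sum_{l=1}^n \mathrm{e}^{\beta'Z_{(l)}}\bigl[\min(T_{(i+1)},T_{(l)})-\min(T_{(i)},T_{(l)})\bigr]
=\frac{T_{(i+1)}-T_{(i)}}{n}\sum_{l=i+1}^n \mathrm{e}^{\beta'Z_{(l)}},
\]
since the bracket is $0$ for $l\leq i$ and $T_{(i+1)}-T_{(i)}$ for $l\geq i+1$ (using that the $T_{(l)}$ are ordered). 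Summing over $i\in B_j$ and cancelling the common factor $1/n$ between numerator and denominator yields exactly \eqref{def:v_nj}.

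The only genuinely non-routine point — and the step I would write out most carefully — is justifying that the common value of $\hoMLE(\cdot;\beta)$ on a constancy block $B_j$ really is the chord slope of the CSD over that block, rather than something that depends on where the block sits relative to the GCM's touch points. This is where one invokes the standard block structure of the GCM: on a maximal block where the left derivative is constant, the GCM coincides with the line segment joining the extreme vertices of that block and lies below all intermediate points, so its slope is the corresponding chord slope; this is precisely the content of the pool-adjacent-violators / max-min formula. I would cite the relevant isotonic-regression reference (e.g.\ Robertson, Wright and Dykstra, or Groeneboom and Jongbloed) for this fact and then the remaining computation is the elementary telescoping above. A minor bookkeeping remark worth including: the CSD in part (i) is indexed so that $P_j$ carries the abscissa increment up to $T_{(j+1)}$, so a ``block of indices $i$'' for the estimator on $[T_{(i)},T_{(i+1)})$ corresponds to the block of CSD-points $P_{i-1},P_i$; this indexing shift does not affect the telescoping, and I would note it so the reader can match the two formulations without confusion.
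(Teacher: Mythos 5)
Your proposal is correct, and it reaches \eqref{def:v_nj} by a genuinely different route from the paper. The paper works entirely on the finite-dimensional optimization problem: it derives the Fenchel/Karush--Kuhn--Tucker conditions \eqref{cond:fenchel1}--\eqref{cond:fenchel2} for the concave program, establishes the auxiliary identity \eqref{cond:extra}, and then uses complementary slackness (the multipliers $\alpha_i$ in \eqref{alpha_i} vanish wherever $\hl_{i-1}<\hl_i$) to localize the zero-sum condition to each maximal block, which yields \eqref{def:v_nj} by solving for the common value. You instead take the GCM characterization of part (i) as the starting point and invoke the standard isotonic-regression fact that on a maximal constancy block the greatest convex minorant is the chord joining the block's endpoint vertices, so the block value is the chord slope; the rest is the elementary telescoping of the $V_n$ and $W_n$ increments, both of which you compute correctly (and which agree with the identities the paper itself uses later, e.g.\ $V_n(T_{(i+1)})-V_n(T_{(i)})=\Delta_{(i)}/n$ and the expression for $\int_{[T_{(i)},T_{(i+1)})}\Phi_n(\beta,u)\,\mathrm{d}u$). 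The two arguments are at bottom equivalent --- complementary slackness is exactly the algebraic statement that the GCM touches the cumulative sum diagram at the block boundaries --- but yours is shorter and more self-contained given (i), while the paper's detour through the explicit Fenchel conditions pays for itself later: \eqref{cond:fenchel1}, \eqref{cond:extra} and its consequence \eqref{cond:extra1} are reused verbatim in the verification of the constrained estimator in Lemma~\ref{lemma:characterization_constrained}, so a writeup following your route would still need to record those conditions somewhere. Your closing remarks on the two implicit conventions --- that the blocks must be the \emph{maximal} constancy blocks for the chord-slope identity to apply, and the one-step indexing shift between the estimator's index $i$ and the CSD vertices $P_{i-1},P_i$ --- are exactly the right points to make explicit.
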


\begin{proof}
The proof of (i) has been provided by Lemma 1 in~\cite{lopuhaa_nane1}. The NPMLE $\hoMLE(x;\beta)$ is obtained by maximizing the (pseudo) loglikelihood function in~\eqref{likelihood} over all $0\leq\lambda_0(T_{(1)})\leq\ldots\leq\lambda_0(T_{(n)})$. As argued in~\cite{lopuhaa_nane1}, the estimator has to be a nondecreasing step function, that is zero for $x<T_{(1)}$, constant on the interval $[T_{(i)},T_{(i+1)})$, for $i=1,\ldots,n-1$ and can be chosen arbitrarily large for $x\geq T_{(n)}$. Then, for fixed $\beta\in\R^p$, the (pseudo) loglikelihood function in~\eqref{likelihood} reduces to

\begin{equation}
\label{likelihood_inc_reduced}
\begin{split}
\sum_{i=1}^{n-1}&
\Delta_{(i)}\log\lambda_0(T_{(i)})
-
\sum_{i=2}^n
\text{e}^{\beta' Z_{(i)}}
\sum_{j=1}^{i-1}
\left[ T_{(j+1)}-T_{(j)}\right]
\lambda_0(T_{(j)})\\
&=
\sum_{i=1}^{n-1}
\left\{
\Delta_{(i)}\log\lambda_0(T_{(i)})
-
\lambda_0(T_{(i)})
\left[ T_{(i+1)}-T_{(i)}\right]
\sum_{l=i+1}^n
\text{e}^{\beta' Z_{(l)}}
\right\}.
\end{split}
\end{equation}
Let $\lambda_i=\lambda_0(T_{(i)})$, for $i=1,\ldots,n-1$ and $\lambda=(\lambda_1,\ldots,\lambda_{n-1})$. Then, finding the NPMLE reduces to maximizing

\begin{equation}
\label{likelihood1}
\phi(\lambda)
=
\sum_{i=1}^{n-1}
\left\{
\Delta_{(i)}\log\lambda_i
-
\lambda_i
\left[ T_{(i+1)}-T_{(i)}\right]
\sum_{l=i+1}^n
\text{e}^{\beta' Z_{(l)}}
\right\},
\end{equation}
over the set $0\leq\lambda_1\leq\ldots\leq\lambda_{n-1}$. The NPMLE corresponds thus to a vector $\hl=(\hl_1,\ldots,\hl_{n-1})$ that maximizes $\phi$ over $0\leq\lambda_1\leq\ldots\leq\lambda_{n-1}$.
To prove (ii), we first derive the Fenchel conditions of the estimator. Thus, we will show that the estimator $\hoMLE(x;\beta)$ maximizes the (pseudo) loglikelihood function in~\eqref{likelihood} over the class of nondecreasing baseline hazard functions if and only if

\begin{equation}
\label{cond:fenchel1}
\sum_{j\geq i}\left\{ \frac{\Delta_{(j)}}{\hl_j}- \left[ T_{(j+1)}-T_{(j)}\right]
\sum_{l=j+1}^{n}
\text{e}^{\beta' Z_{(l)}}\right\}\leq 0,
\end{equation}
for $i=1,\ldots,n-1$, and

\begin{equation}
\label{cond:fenchel2}
\sum_{j=1}^{n-1}\left\{ \frac{\Delta_{(j)}}{\hl_j}- \left[ T_{(j+1)}-T_{(j)}\right]
\sum_{l=j+1}^{n}
\text{e}^{\beta' Z_{(l)}}\right\}\hl_j=0.
\end{equation}
The NPMLE $\hoMLE(x;\beta)$ is thus uniquely determined by these Fenchel conditions.
The rest of the proof focuses on deriving the Fenchel conditions \eqref{cond:fenchel1} and \eqref{cond:fenchel2} and on establishing~\eqref{def:v_nj}.

First, note that the function $\phi$ in~\eqref{likelihood1} is concave and that the vector of partial derivatives $\nabla\phi(\lambda)=(\nabla_1\phi(\lambda),\ldots,\nabla_{n-1}\phi(\lambda))$ is given by

\[
\nabla\phi(\lambda)
=\left(
\frac{\Delta_{(1)}}{\lambda_1}-\left[T_{(2)}-T_{(1)}\right]\sum_{l=2}^{n}\text{e}^{\beta'Z_{(l)}},
\ldots,\frac{\Delta_{(n-1)}}{{\lambda_{n-1}}}-\left[T_{(n)}-T_{(n-1)}\right]\text{e}^{\beta'Z_{(n)}}
\right).
\]
%
%
Define now the functions $g_i(\lambda)=\lambda_{i-1}-\lambda_{i}$, for $i=1,\ldots,n-1$ and $\lambda_0=0$, and the vector $g(\lambda)=(g_1(\lambda),\ldots,g_{n-1}(\lambda))$. Moreover, define the matrix of partial derivatives by

\begin{equation}
\label{def:matrix_par_der}
G=\left(\frac{\partial g_i(\lambda)}{\partial\lambda_j}  \right), \quad\text{for } i=1,\ldots,n-1;\,j=1,\ldots,n-1.
\end{equation}
Let $\tilde{\phi}(\lambda)=-\phi(\lambda)$. Then, maximizing~\eqref{likelihood1} over all $0\leq\lambda_1\leq\ldots\leq\lambda_{n-1}$ is equivalent with minimizing $\tilde{\phi}(\lambda)$ under the restriction that all components of the vector $g(\lambda)$ are negative.
An adaptation of the Karush-Kuhn-Tucker theorem (e.g., see Theorem 8.1 in~\cite{groeneboom_course}) states that~$\hl$ minimizes $\tilde{\phi}$ over all vectors $\lambda$ such that $g_i(\lambda)\leq0$, for all $i=1,\ldots,n-1$, if and only if the following conditions hold

\begin{equation}
\label{cond:KKT1}
\nabla\tilde{\phi}(\hl)+G^{T}\alpha=0,
\end{equation}

\begin{equation}
\label{cond:KKT2}
g(\hl)+w=0,
\end{equation}

\begin{equation}
\label{cond:KKT3}
\langle\alpha,w\rangle=0,
\end{equation}
for $\alpha=(\alpha_1,\ldots,\alpha_{n-1})$, with $\alpha_i\geq0$, $i=1,\ldots,n-1$ and $w=(w_1,\ldots,w_{n-1})$, with $w_i\geq0$, for $i=1,\ldots,n-1$. The first condition~\eqref{cond:KKT1}, yields that

\begin{equation}
\label{alpha_i}
\alpha_i=-\sum_{j\geq i}\nabla_j\phi(\hl)=-\sum_{j\geq i}\left\{ \frac{\Delta_{(j)}}{\hl_j}- \left[ T_{(j+1)}-T_{(j)}\right]
\sum_{l=j+1}^{n}
\text{e}^{\beta' Z_{(l)}}\right\}.
\end{equation}
Since $\alpha_i\geq0$, for all $i=1,\ldots,n-1$, condition~\eqref{cond:fenchel1} is immediate.
From~\eqref{cond:KKT2}, $w=-g(\hl)=(\hl_1-\hl_0,\ldots,\hl_{n-1}-\hl_{n-2})$, with $\hl_0=0$.
Note that the condition $w_i\geq0$ implies that $\hl_{i-1}\leq\hl_{i}$, for all $i=1,\ldots,n-1$, which is trivially satisfied.
Finally, by~\eqref{cond:KKT3},

\[
\sum_{i=1}^{n-1}(\hl_{i}-\hl_{i-1})\sum_{j\geq i}\nabla_j\phi(\hl)=0,
\]
which re-writes exactly to~\eqref{cond:fenchel2}.

To derive the expression in~\eqref{def:v_nj}, we prove first that~\eqref{cond:fenchel1} and~\eqref{cond:fenchel2} imply that

\begin{equation}
\label{cond:extra}
\sum_{j=1}^{n-1}\left\{ \frac{\Delta_{(j)}}{\hl_j}- \left[ T_{(j+1)}-T_{(j)}\right]
\sum_{l=j+1}^{n}
\text{e}^{\beta' Z_{(l)}}\right\}=0.
\end{equation}
Condition~\eqref{cond:fenchel1} gives that $\sum_{j=1}^{n-1}\nabla_j\phi(\hl)\leq0$. In addition, as the maximizer $\hl$ is nondecreasing,

\[
\begin{split}
\hl_1\sum_{j=1}^{n-1}\nabla_j\phi(\hl)=&-\nabla_2\phi(\hl)\hl_2-\nabla_3\phi(\hl)\hl_3-\ldots-\nabla_{n-1}\phi(\hl)\hl_{n-1}\\
&+ \nabla_2\phi(\hl)\hl_1+\nabla_3\phi(\hl)\hl_1+\ldots+\nabla_{n-1}\phi(\hl)\hl_1\\
=&\sum_{i=2}^{n-1}(\hl_{i-1}-\hl_i)\sum_{j\geq i}\nabla_j\phi(\hl)\geq 0.
\end{split}
\]
This shows~\eqref{cond:extra}.
Now let $B_1,\ldots,B_k$ be blocks of indices on which $\hl$ is constant such that $B_1\cup\ldots\cup B_k=\{1,\ldots,n-1\}$ and let $v_{nj}(\beta)$ be the value of $\hl$ on the block $B_j$, with $j=1,\ldots,k$. If $k=1$, then the expression of $v_{n1}$ is immediate from~\eqref{cond:extra}. Moreover, observe that, by~\eqref{cond:KKT3}, $\sum_{i=1}^{n-1}\alpha_i\left(\hl_i-\hl_{i-1}\right)=0$, and since $\alpha_i\geq0$ and $\hl_i\geq\hl_{i-1}$, for any $i=1,\ldots,n-1$, it will follow that $\alpha_i=0$, whenever $\hl_{i-1}<\hl_i$. Hence, for $k\geq2$, there exist $k-1$ $\alpha$'s that are zero. Then~\eqref{def:v_nj} follows by~\eqref{alpha_i} and~\eqref{cond:extra}. For example, for $k\geq3$, choose any two consecutive~$\alpha_i$ that are zero. From~\eqref{alpha_i}, we get that by subtracting these $\alpha_i$'s,

\[
\sum_{i\in B_j}\nabla_i\phi(\hl)= \sum_{i\in B_j}\left\{
\frac{\Delta_{(i)}}{v_{nj}(\beta)}-\left[T_{(i+1)}-T_{(i)}\right]\sum_{l=i+1}^{n}\text{e}^{\beta'Z_{(l)}}\right\}
= 0.
\]
As $v_{nj}(\beta)$ is constant on $B_j$, this yields~\eqref{def:v_nj}.
\end{proof}
As mentioned beforehand, the proposed unconstrained estimator is thus $\hoMLE(x)=\hoMLE(x;\hb)$. Equivalently, on each block of indices $B_j$, for $j=1,\ldots,k$, we propose the estimate $\hat{v}_{nj}=v_{nj}(\hb)$.
Under the null hypothesis $H_0:\lambda_0(x_0)=\theta_0$, the characterization of the constrained maximum likelihood estimator $\hoMLE^0$ is provided by the next lemma.

\begin{lemma}
\label{lemma:characterization_constrained}

Let $x_0\in(0,\tau_H)$ fixed, such that $T_{(m)}< x_0< T_{(m+1)}$, for a given $1\leq m\leq n-1$. Consider a fixed $\beta\in\R^p$.

\begin{enumerate}[(i)]
\item
For $i=1,\ldots,m$, let $\hll_i$ be the left derivative of the GCM at the point $P_i^L$ of the CSD consisting of the points $P_j^L=P_j$, for $j=1,\ldots,m$, with $P_j$ defined in~\eqref{def:points_Pj} and $P_0^L=(0,0)$. Moreover, for $i=m+1,\ldots,n-1$, let $\hlr_i$ be the left derivative of the GCM at the point $P_i^R$ of the CSD consisting of the points $P_j^R=P_j$, for $j=m,\ldots,n-1$, with $P_j$ defined in~\eqref{def:points_Pj}.
Then, for $\theta_0\in(0,\infty)$, the NPMLE $\hoMLE^0(x)$ of a nondecreasing baseline hazard function $\lambda_0$, under the null hypothesis $H_0:\lambda_0=\theta_0$, is of the form

\begin{equation}
\label{def:constrained_estim}
\hoMLE^0(x;\beta)
=
\begin{cases}
0 & x < T_{(1)},\\
\hlo_i & T_{(i)}\leq x < T_{(i+1)}, \text{ for }i\in\{1,\ldots,n-1\}\setminus{\{m\}}\\
\hlo_m & T_{(m)}\leq x < x_0,\\
\theta_0 & x_0\leq x<T_{(m+1)},\\
\infty & x\geq T_{(n)},
\end{cases}
\end{equation}
where $\hlo_i=\min(\hll_i,\theta_0)$, for $i=1,\ldots,m$, and $\hlo_i=\max(\hlr_i,\theta_0)$, for $i=m+1,\ldots,n-1$.

\item
For $k\geq1$, let $B_1^0,\ldots,B_k^0$ be blocks of indices such that $\hoMLE^0(x;\beta)$ is constant on each block and $B_1^0\cup\ldots\cup B_k^0=\{1,\ldots,n-1\}$. Then, there is one block, say $B_r^0$, on which $\hoMLE^0(x;\beta)$ is equal to $\theta_0$, and one block, say $B_p^0$, that contains $m$. On all other blocks $B_j^0$, denote by $v_{nj}^0(\beta)$ the value of $\hoMLE^0(x;\beta)$ on block $B_j^0$. Then,

\begin{equation}
\label{def:v_nj0}
v_{nj}^0(\beta)=\frac{\sum_{i\in B_j^0}\Delta_{(i)}}{\sum_{i\in B_j^0}\left[T_{(i+1)}-T_{(i)}\right]\sum_{l=i+1}^{n}\text{e}^{\beta'Z_{(l)}}},
\end{equation}
for $j=1,\ldots,p-1,p+1,\ldots,k$. On the block $B_p^0$ that contains $m$,

\begin{equation}
\label{def:v_np0}
v_{np}^0(\beta)=\frac{\sum_{i\in B_p^0}\Delta_{(i)}}{\sum_{i\in B_p^0\setminus\{m\}}\left[T_{(i+1)}-T_{(i)}\right]\sum_{l=i+1}^{n}\text{e}^{\beta'Z_{(l)}}+[x_0-T_{(m)}]\sum_{l=m+1}^{n}\text{e}^{\beta'Z_{(l)}}}.
\end{equation}
\end{enumerate}
\end{lemma}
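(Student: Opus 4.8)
The plan is to follow the same two-stage strategy used in the proof of Lemma~\ref{lemma:characterization_unconstrained}: first derive the Karush--Kuhn--Tucker/Fenchel characterization of the constrained estimator, and then extract the closed-form block values from it. For part~(i), the key observation is that the constraint $\lambda_0(x_0)=\theta_0$ together with monotonicity forces $\lambda_0(T_{(i)})\le\theta_0$ for $i\le m$ and $\lambda_0(T_{(i)})\ge\theta_0$ for $i\ge m+1$. Hence, writing the reduced (pseudo) loglikelihood exactly as in~\eqref{likelihood_inc_reduced}--\eqref{likelihood1} but now with the interval $[T_{(m)},T_{(m+1)})$ split at $x_0$ into $[T_{(m)},x_0)$ (where $\lambda_0=\lambda_m$) and $[x_0,T_{(m+1)})$ (where $\lambda_0=\theta_0$ is fixed), the term involving $\theta_0$ is a constant and the remaining objective $\phi$ \emph{separates} into a function of $(\lambda_1,\dots,\lambda_m)$ subject to $0\le\lambda_1\le\dots\le\lambda_m\le\theta_0$ and a function of $(\lambda_{m+1},\dots,\lambda_{n-1})$ subject to $\theta_0\le\lambda_{m+1}\le\dots\le\lambda_{n-1}$. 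Each of these is a standard isotonic-type optimization whose solution, by the same GCM argument as in~\cite{lopuhaa_nane1}, is the left derivative of the greatest convex minorant of the appropriate sub-CSD, truncated at $\theta_0$: on the left block the unconstrained slopes $\hll_i$ are replaced by $\min(\hll_i,\theta_0)$, and on the right block the slopes $\hlr_i$ are replaced by $\max(\hlr_i,\theta_0)$. A short convex-analysis argument (or direct verification of the Fenchel inequalities for the truncated vector) shows that this truncation indeed produces the constrained maximizer, giving~\eqref{def:constrained_estim}.

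For part~(ii), I would again produce the Fenchel conditions for the split problem. The only subtlety relative to Lemma~\ref{lemma:characterization_unconstrained} is that the coefficient of $\lambda_m$ in the objective changes: because $\lambda_0$ equals $\lambda_m$ only on $[T_{(m)},x_0)$ rather than on all of $[T_{(m)},T_{(m+1)})$, the ``exposure'' term attached to index $m$ becomes $[x_0-T_{(m)}]\sum_{l=m+1}^{n}\mathrm{e}^{\beta'Z_{(l)}}$ instead of $[T_{(m+1)}-T_{(m)}]\sum_{l=m+1}^{n}\mathrm{e}^{\beta'Z_{(l)}}$, while the derivative term $\Delta_{(m)}/\lambda_m$ is unchanged. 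Then the block-value computation is literally the argument from the previous lemma, carried out separately on $\{1,\dots,m\}$ and on $\{m+1,\dots,n-1\}$: on a block $B_j^0$ strictly inside one of these two index sets and not equal to $B_r^0$ (the $\theta_0$ block) or $B_p^0$ (the block containing $m$), two of the Lagrange multipliers $\alpha_i$ vanish at the block endpoints, and subtracting the corresponding Fenchel identities gives $\sum_{i\in B_j^0}\nabla_i\phi(\hl^0)=0$, i.e.~\eqref{def:v_nj0}; on the block $B_p^0$ the same subtraction, but now with the modified coefficient of $\lambda_m$, yields~\eqref{def:v_np0}.

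Two points need care. First, one must argue that the block containing $m$ and the block on which $\hoMLE^0$ equals $\theta_0$ are genuinely distinct from each other and are handled by the split (if $\hlo_m=\theta_0$, i.e.\ the truncation is active at $m$, then $m$ lies in the $\theta_0$ block $B_r^0$ and~\eqref{def:v_np0} is vacuous, while if $\hlo_m<\theta_0$ then $m$ lies in a proper left block and~\eqref{def:v_np0} applies); I would state this case distinction explicitly so that the enumeration of blocks in~(ii) is unambiguous. Second, the ``$\sum\nabla_i\phi=0$ on interior blocks'' step relies, as in the unconstrained proof, on the analogue of~\eqref{cond:extra}; here that identity holds \emph{within each of the two sub-problems} (one gets $\sum_{i\le m}\le 0$ and, using monotonicity and that the smallest slope on the left block is $\le\theta_0$, also $\ge 0$, and symmetrically on the right), which is exactly what is needed to pin down the first and last blocks of each side.

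The main obstacle I expect is not the block-value algebra --- that is a routine repetition of Lemma~\ref{lemma:characterization_unconstrained} --- but rather justifying rigorously that the pool-adjacent-violators solution of the \emph{truncated} isotonic problem is the componentwise truncation $\min(\hll_i,\theta_0)$ (resp.\ $\max(\hlr_i,\theta_0)$) of the untruncated one. This is a known fact for least-squares isotonic regression, but here the criterion $\phi$ is a general concave (log-plus-linear) function rather than a quadratic, so I would either invoke a monotonicity/order property of the argmax under the additional upper (lower) bound constraint --- the optimizer of a concave separable problem is monotone in the constraint set in the appropriate lattice sense --- or, more self-containedly, simply verify that the truncated vector satisfies the KKT system~\eqref{cond:KKT1}--\eqref{cond:KKT3} for the constrained problem (the extra constraint $\lambda_m\le\theta_0$, or $\lambda_{m+1}\ge\theta_0$, contributes one more nonnegative multiplier, which is positive exactly when the truncation is active), and then appeal to uniqueness. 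I would present the KKT-verification route, since it parallels the previous proof and keeps the paper self-contained.
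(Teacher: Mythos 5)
Your overall strategy is the same as the paper's: derive the Karush--Kuhn--Tucker/Fenchel conditions for the constrained problem (with the gradient at index $m$ carrying the reduced exposure $[x_0-T_{(m)}]\sum_{l=m+1}^{n}\mathrm{e}^{\beta'Z_{(l)}}$), verify that the truncated left/right GCM slopes satisfy them, and read off the block values from the vanishing of the multipliers at block boundaries. You have also correctly isolated the two delicate points, and your decision to verify the KKT system directly for the truncated vector, rather than invoke a general ``truncation commutes with isotonization'' principle, is exactly the route the paper takes.

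There is, however, one step in your plan that does not go through as written, and it sits at the junction of the two subtleties you flag. The left sub-problem you set up has objective $\sum_{i\le m}\{\Delta_{(i)}\log\lambda_i-d_i\lambda_i\}$ with $d_i=[T_{(i+1)}-T_{(i)}]\sum_{l=i+1}^{n}\mathrm{e}^{\beta'Z_{(l)}}$ for $i<m$ but $d_m=[x_0-T_{(m)}]\sum_{l=m+1}^{n}\mathrm{e}^{\beta'Z_{(l)}}$; its isotonic solution is the slope of the GCM of the CSD whose $m$-th increment has abscissa $d_m/n$, \emph{not} of the CSD built from the points $P_j$ of \eqref{def:points_Pj}, whose $m$-th increment carries the full exposure $[T_{(m+1)}-T_{(m)}]$. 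These two isotonic problems have different solutions precisely when the top slope of the left GCM stays strictly below $\theta_0$: in that case the candidate $\min(\hll_i,\theta_0)=\hll_i$ gives $\alpha_m=\sum_{j\le m}\nabla_j\phi^0=(T_{(m+1)}-x_0)\sum_{l=m+1}^{n}\mathrm{e}^{\beta'Z_{(l)}}>0$ while the constraint $\lambda_m\le\theta_0$ is slack, so complementary slackness fails (equivalently, raising the whole block of $\hll$ containing $m$ strictly increases the objective), and the value \eqref{def:v_np0}, which correctly uses the reduced exposure, does not coincide with $\hll_m$. So ``the appropriate sub-CSD'' must have its last point at abscissa $W_n(\beta,T_{(m)})-W_n(\beta,T_{(1)})+n^{-1}[x_0-T_{(m)}]\sum_{l=m+1}^{n}\mathrm{e}^{\beta'Z_{(l)}}$ for your truncation claim, your KKT verification, and \eqref{def:v_np0} to be mutually consistent; with $P_m^L=P_m$ taken literally the verification breaks when $\hll_m<\theta_0$. (The paper's own proof passes over the same point at the step ``constraining $\hll_m$ to be $\theta_0$ on $[x_0,T_{(m+1)})$'', which is an identity only when $\hll_m\ge\theta_0$.) Either work with the modified CSD throughout, or treat the case $\hll_m<\theta_0$ separately and explain why the discrepancy is acceptable.
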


\begin{proof}
We will derive the Karush-Kuhn-Tucker (KKT) conditions, that uniquely determine the constrained NPMLE, and which implicitly provide the characterization in (ii). To prove the lemma, we will show that the estimator proposed in (i) satisfies these conditions.

The constrained NPMLE estimator is obtained by maximizing the objective function~\eqref{likelihood} over $0\leq \lambda_0(T_{(1)})\leq\ldots\leq\lambda_0(T_{(m)})\leq\theta_0\leq\lambda_0(T_{(m+1)})\leq\ldots\leq \lambda_0(T_{(n-1)})$. In line with the reasoning for the unconstrained estimator, it can be argued that the constrained estimator has to be a nondecreasing step function that is zero for $x<T_{(1)}$, constant on $[T_{(i)},T_{(i+1)})$, for $i=1,\ldots,n-1$, is equal to $\theta_0$ on the interval $[x_0,T_{(m+1)})$, and can be chosen arbitrarily large for $x\geq T_{(n)}$. Therefore, for a fixed $\beta\in\R$, the (pseudo) loglikelihood function in~\eqref{likelihood} reduces to

\begin{equation}
\label{likelihood_inc_constr_reduced}
\begin{split}
&\sum_{i=1}^{m-1}
\left\{
\Delta_{(i)}\log\lambda_0(T_{(i)})
-
\lambda_0(T_{(i)})
\left[ T_{(i+1)}-T_{(i)}\right]
\sum_{l=i+1}^n
\text{e}^{\beta' Z_{(l)}}
\right\}\\
&\quad+
\Delta_{(m)}\log\lambda_0(T_{(m)})
-
\lambda_0(T_{(m)})
\left[ x _0-T_{(m)}\right]
\sum_{l=m+1}^n
\text{e}^{\beta' Z_{(l)}}\\
&\quad-
\theta_0
\left[ T_{(m+1)}-x _0\right]
\sum_{l=m+1}^n
\text{e}^{\beta' Z_{(l)}}\\
&\quad+
\sum_{i=m+1}^{n-1}
\left\{
\Delta_{(i)}\log\lambda_0(T_{(i)})
-
\lambda_0(T_{(i)})
\left[ T_{(i+1)}-T_{(i)}\right]
\sum_{l=i+1}^n
\text{e}^{\beta' Z_{(l)}}
\right\}.
\end{split}
\end{equation}
By letting $\lambda_i=\lambda_0(T_{(i)})$, for $i=1,\ldots,n-1$ and $\lambda=(\lambda_1,\ldots,\lambda_{n-1})$, we then want to maximize
\begin{equation}
\label{function_phi_constrained}
\begin{split}
\phi^0(\lambda)
=&
\sum_{i=1}^{m-1}
\left\{
\Delta_{(i)}\log\lambda_i
-
\lambda_i
\left[ T_{(i+1)}-T_{(i)}\right]
\sum_{l=i+1}^n
\text{e}^{\beta' Z_{(l)}}
\right\}\\
&+
\Delta_{(m)}\log\lambda_m
-
\lambda_m
\left[ x _0-T_{(m)}\right]
\sum_{l=m+1}^n
\text{e}^{\beta' Z_{(l)}}\\
&+
\sum_{i=m+1}^{n-1}
\left\{
\Delta_{(i)}\log\lambda_i
-
\lambda_i
\left[ T_{(i+1)}-T_{(i)}\right]
\sum_{l=i+1}^n
\text{e}^{\beta' Z_{(l)}}
\right\},
\end{split}
\end{equation}
over the set $0\leq\lambda_1\leq\ldots\leq\lambda_m\leq\theta_0\leq\lambda_{m+1}\leq\ldots\leq\lambda_{n-1}$. Let the vector $\hl^c=(\hl^c_1,\ldots,\hl^c_{n-1})$ denote the constrained NPMLE under the null hypothesis $H_0:\lambda_0(x_0)=\theta_0$.
We will show next that $\hl^c$ maximizes the objective function in~\eqref{function_phi_constrained} over the class of nondecreasing baseline hazard functions, under the null hypothesis, if and only if the following conditions are satisfied

\begin{equation}
\label{cond:fenchel1_constr}
\sum_{j\leq i}\left\{ \frac{\Delta_{(j)}}{\hl^c_j}- \left[ T_{(j+1)}-T_{(j)}\right]
\sum_{l=j+1}^{n}
\text{e}^{\beta' Z_{(l)}}\right\}\geq 0, \qquad \text{for } i=1,\ldots,m-1,
\end{equation}

\begin{equation}
\label{cond:fenchel1_constr_2}
\begin{split}
\sum_{j=1}^{m-1}\Bigg\{ \frac{\Delta_{(j)}}{\hl^c_j}- &\left[ T_{(j+1)}-T_{(j)}\right]
\sum_{l=j+1}^{n}
\text{e}^{\beta' Z_{(l)}}\Bigg\}\\
&+
\frac{\Delta_{(m)}}{\hl^c_m}- \left[ x_0-T_{(m)}\right]
\sum_{l=m+1}^{n}
\text{e}^{\beta' Z_{(l)}}
\geq 0,
\end{split}
\end{equation}

\begin{equation}
\label{cond:fenchel1_constr_1}
\sum_{j\geq i}\left\{ \frac{\Delta_{(j)}}{\hl^c_j}- \left[ T_{(j+1)}-T_{(j)}\right]
\sum_{l=j+1}^{n}
\text{e}^{\beta' Z_{(l)}}\right\}\leq 0, \qquad \text{for } i=m+1,\ldots,n-1,
\end{equation}
and

\begin{equation}
\label{cond:fenchel2_constr}
\begin{split}
\sum_{\substack{j=1\\  j\neq m}}^{n-1}\Bigg\{ \frac{\Delta_{(j)}}{\hl^c_j}-&\left[ T_{(j+1)}-T_{(j)}\right]
\sum_{l=j+1}^{n}
\text{e}^{\beta' Z_{(l)}}\Bigg\}\left( \hl^c_j-\theta_0\right)\\
&+
\left\{ \frac{\Delta_{(m)}}{\hl^c_m}-\left[ x_0-T_{(m)}\right]
\sum_{l=m+1}^{n}
\text{e}^{\beta' Z_{(l)}}\right\}\left( \hl^c_m-\theta_0\right)
=
0.
\end{split}
\end{equation}
The NPMLE $\hl^c$ is thus uniquely determined by these conditions. To prove (i), we will show that $\hoMLE^0$ defined in~\eqref{def:constrained_estim} verifies the Karush-Kuhn-Tucker (KKT) conditions \eqref{cond:fenchel1_constr}-\eqref{cond:fenchel2_constr}. Therefore,~$\hoMLE^0$ is the unique maximizer of $\phi^0(\lambda)$ in~\eqref{function_phi_constrained}, over the set $0\leq\lambda_1\leq\ldots\leq\lambda_m\leq\theta_0\leq\lambda_{m+1}\leq\ldots\leq\lambda_{n-1}$. As it will be seen further, despite bothersome calculations, the distinct form of the likelihood grants a unified framework for deriving the KKT conditions, that uses all the follow-up times, unlike the reasoning in~\cite{banerjeewellner:2001}, where the (pseudo) loglikelihood is split and arguments are carried both to the left and to the right of $x_0$.

Similar to the unconstrained case, observe that the function $\phi^0$ is concave and that the vector of partial derivatives is $\nabla\phi^0(\lambda)=(\nabla_1\phi^0(\lambda),\ldots,\nabla_{n-1}\phi^0(\lambda))$, with

\[
\nabla_i\phi^0(\lambda)
=
\frac{\Delta_{(i)}}{\lambda_i}-\left[T_{(i+1)}-T_{(i)}\right]\sum_{l=i+1}^{n}\text{e}^{\beta'Z_{(l)}},
\]
for $i=1,\ldots,m-1,m+1,\ldots,n-1$, and

\[
\nabla_m\phi^0(\lambda)
=
\frac{\Delta_{(m)}}{\lambda_m}-\left[x_0-T_{(m)}\right]\sum_{l=m+1}^{n}\text{e}^{\beta'Z_{(l)}}.
\]
Note that the form of $\nabla_m\phi^0(\lambda)$ differs from the form of $\nabla_i\phi^0(\lambda)$, for $i=1,\ldots,m-1,m+1,\ldots,n-1$.
Moreover, define the vector $g(\lambda)=(g_1(\lambda),\ldots,g_{n-1}(\lambda))$, with

\[
g_i(\lambda)
=
\begin{cases}
\lambda_i-\lambda_{i+1} & i =1,\ldots,m-1,\\
\lambda_m-\theta_0 & i=m,\\
\theta_0-\lambda_{m+1} & i=m+1,\\
\lambda_{i-1}-\lambda_i & i=m+2,\ldots,n-1,
\end{cases}
\]
and consider the matrix of partial derivatives defined in~\eqref{def:matrix_par_der}. Computations as in~\eqref{alpha_i} can be derived to show that condition~\eqref{cond:KKT1} yields~\eqref{cond:fenchel1_constr}-\eqref{cond:fenchel1_constr_1}, upon noting that

\begin{equation}
\label{alpha_i_constr}
\alpha_i
=
\begin{cases}
\sum_{j\leq i}\nabla_j\phi^0(\hl^c) & i=1,\ldots,m,\\
-\sum_{j\geq i}\nabla_j\phi^0(\hl^c) & i=m+1,\ldots,n-1.
\end{cases}
\end{equation}
Condition~\eqref{cond:KKT2} gives that $w=(\hl_2^c-\hl_1^c,\ldots,\theta_0-\hl_m^c,\hl_{m+1}^c-\theta_0,\ldots,\hl_{n-1}^c-\hl_{n-2}^c)$,
which together with~\eqref{cond:KKT3} and~\eqref{alpha_i_constr}, yields~\eqref{cond:fenchel2_constr}.
Moreover,~\eqref{cond:KKT3} gives that

\[
\begin{split}
\sum_{i=1}^{m-1}\alpha_i\left( \hl^c_{i+1}-\hl^c_i \right)+\alpha_m&\left( \theta_0-\hl^c_m \right)\\
&+\alpha_{m+1}\left( \hl_{m+1}^c-\theta_0 \right)
+\sum_{m+2}^{n-1}\alpha_i\left( \hl^c_i-\hl^c_{i-1} \right)=0.
\end{split}
\]
Obviously, $\alpha_i=0$ if $\hl^c_i<\hl^c_{i+1}$, for $i=1,\ldots,m-1,m+1,\ldots,n-1$ and~\eqref{def:v_nj0} can be derived as in the proof of Lemma~\ref{lemma:characterization_unconstrained}. 
For the block $B^0_p$ containing $m$, we get that

\[
\begin{split}
\sum_{i\in B_p^0\setminus\{m\}}
\Bigg\{
\frac{\Delta_{(i)}}{v_{np}^0(\beta)}
-
&\left[T_{(i+1)}-T_{(i)}\right]\sum_{l=i+1}^{n}\text{e}^{\beta'Z_{(l)}}
\Bigg\}\\
&+
\frac{\Delta_{(m)}}{v_{np}^0(\beta)}
-
\left[x_0-T_{(m)}\right]\sum_{l=m+1}^{n}\text{e}^{\beta'Z_{(l)}}=0,
\end{split}
\]
which gives exactly~\eqref{def:v_np0}. Therefore showing that the estimator $\hoMLE^0$ defined in~\eqref{def:constrained_estim} satisfies the KKT conditions~\eqref{cond:fenchel1_constr}-\eqref{cond:fenchel2_constr} also proves (ii).

Recall that $\hoMLE^0$ is $\min(\hll_i,\theta_0)$, for $i=1,\ldots,m$, and that $\hll_i$ is the unconstrained estimator when considering only the follow-up times $T_{(1)},\ldots,T_{(m)}$. Moreover, $\hoMLE^0$ is $\max(\hlr_i,\theta_0)$, for $i=m+1,\ldots,n-1$, where $\hlr_i$ can be viewed as the unconstrained estimator when considering only the follow-up times $T_{(m)},\ldots,T_{(n-1)}$. Note that~\eqref{cond:extra} together with~\eqref{cond:fenchel1} imply that

\begin{equation}
\label{cond:extra1}
\sum_{j\leq i}\left\{ \frac{\Delta_{(j)}}{\hl_j}- \left[ T_{(j+1)}-T_{(j)}\right]
\sum_{l=j+1}^{n}
\text{e}^{\beta' Z_{(l)}}\right\}
\geq 0,
\qquad \text{for } i=1,\ldots,n-1.
\end{equation}
The condition holds for $i=1,\ldots,m-1$, and, moreover,

\[
\begin{split}
\sum_{j\leq i}
\Big\{
\frac{\Delta_{(j)}}{\min(\hll_j,\theta_0)}- &\left[ T_{(j+1)}-T_{(j)}\right]
\sum_{l=j+1}^{n}
\text{e}^{\beta' Z_{(l)}}
\Big\}\\
&\geq
\sum_{j\leq i}\left\{ \frac{\Delta_{(j)}}{\hll_j}- \left[ T_{(j+1)}-T_{(j)}\right]
\sum_{l=j+1}^{n}
\text{e}^{\beta' Z_{(l)}}\right\}\geq 0,
\end{split}
\]
for $i=1,\ldots,m-1$. Therefore, $\min(\hll_i,\theta)$, for $i=1,\ldots,m-1$ satisfies~\eqref{cond:fenchel1_constr}.  Furthermore,~\eqref{cond:extra1} holds for $i=m$, which implies that

\[
\begin{split}
&\sum_{j=1}^{m-1}
\left\{
\frac{\Delta_{(j)}}{\min(\hll_j,\theta_0)}- \left[ T_{(j+1)}-T_{(j)}\right]
\sum_{l=j+1}^{n}
\text{e}^{\beta' Z_{(l)}}
\right\}\\
&\quad+
\left\{
\frac{\Delta_{(m)}}{\min(\hll_m,\theta_0)}- \left[ x_0-T_{(m)}\right]
\sum_{l=j+1}^{n}
\text{e}^{\beta' Z_{(l)}}
\right\}\\
&\geq
\sum_{j=1}^m\left\{ \frac{\Delta_{(j)}}{\min(\hll_j,\theta_0)}- \left[ T_{(j+1)}-T_{(j)}\right]
\sum_{l=j+1}^{n}
\text{e}^{\beta' Z_{(l)}}\right\}\geq0,
\end{split}
\]
hence $\hoMLE^0$ satisfies~\eqref{cond:fenchel1_constr_2} as well. It is straightforward that $\max(\hlr_i,\theta_0)$, for $i=m+1,\ldots,n-1$ satisfies~\eqref{cond:fenchel1_constr_1}, since, by definition, $\hlr_i$ satisfies~\eqref{cond:fenchel1}, for $i=m+1,\ldots,n-1$, and

\[
\begin{split}
\sum_{j\geq i}
\Bigg\{
\frac{\Delta_{(j)}}{\max(\hlr_j,\theta_0)}- &\left[ T_{(j+1)}-T_{(j)}\right]
\sum_{l=j+1}^{n}
\text{e}^{\beta' Z_{(l)}}
\Bigg\}\\
&\leq
\sum_{j\geq i}
\left\{
\frac{\Delta_{(j)}}{\hlr_j}- \left[ T_{(j+1)}-T_{(j)}\right]
\sum_{l=j+1}^{n}
\text{e}^{\beta' Z_{(l)}}
\right\}
\leq 0.
\end{split}
\]
Finally, to check if $\hoMLE^0$ verifies the condition~\eqref{cond:fenchel2_constr}, we will argue on the blocks of indices on which $\hoMLE$, and hence $\hll_i$ and $\hlr_i$ are constant. By~\eqref{def:v_nj}, for each block $B_j$, with $j=1,\ldots,k$, on which the unconstrained estimator has the constant value $v_{nj}(\beta)$,

\[
\sum_{i\in B_j}
\left\{
\frac{\Delta_{(i)}}{v_{nj}(\beta)}
-
\left[ T_{(i+1)}-T_{(i)}\right]
\sum_{l=i+1}^{n}
\text{e}^{\beta' Z_{(l)}}
\right\}
v_{nj}(\beta)
=0,
\]
and

\[
\sum_{i\in B_j}
\left\{
\frac{\Delta_{(i)}}{v_{nj}(\beta)}
-
\left[ T_{(i+1)}-T_{(i)}\right]
\sum_{l=i+1}^{n}
\text{e}^{\beta' Z_{(l)}}
\right\}
=0.
\]
Then, on each block $B_j$ that does not contain $m$, we can write

\begin{equation}
\label{block_without_m}
\begin{split}
\sum_{i\in B_j}
\Bigg\{
\frac{\Delta_{(i)}}{\hl_i}
-
&\left[ T_{(i+1)}-T_{(i)}\right]
\sum_{l=i+1}^{n}
\text{e}^{\beta' Z_{(l)}}
\Bigg\}
\hl_i\\
&=
\theta_0
\sum_{i\in B_j}
\left\{
\frac{\Delta_{(i)}}{\hl_i}
-
\left[ T_{(i+1)}-T_{(i)}\right]
\sum_{l=i+1}^{n}
\text{e}^{\beta' Z_{(l)}}
\right\},
\end{split}
\end{equation}
and this holds for $\hll_i$, as well as for $\hlr_i$. It is straightforward that $\min(\hll_i,\theta_0)$, for $i=1,\ldots,m$ and $\max(\hlr_i,\theta_0)$, for $i=m+1,\ldots,n-1$ satisfy this relationship. For the block $B_p$ that contains $m$, we have

\[
\begin{split}
&\sum_{i\in B_p\setminus{\{m\}}}
\left\{
\frac{\Delta_{(i)}}{\hll_i}
-
\left[ T_{(i+1)}-T_{(i)}\right]
\sum_{l=i+1}^{n}
\text{e}^{\beta' Z_{(l)}}
\right\}
\hll_i\\
&\quad\quad+
\Bigg\{
\frac{\Delta_{(m)}}{\hll_m}
-
\left[ T_{(m+1)}-x_0\right]
\sum_{l=m+1}^{n}
\text{e}^{\beta' Z_{(l)}}
-
\left[ x_0-T_{(m)}\right]
\sum_{l=m+1}^{n}
\text{e}^{\beta' Z_{(l)}}
\Bigg\}
\hll_m\\
&\quad=
\theta_0
\sum_{i\in B_p\setminus{\{m\}}}
\left\{
\frac{\Delta_{(i)}}{\hll_i}
-
\left[ T_{(i+1)}-T_{(i)}\right]
\sum_{l=i+1}^{n}
\text{e}^{\beta' Z_{(l)}}
\right\}\\
&\quad\quad+
\theta_0
\left\{
\frac{\Delta_{(m)}}{\hl_m}
-
\left[ T_{(m+1)}-x_0\right]
\sum_{l=m+1}^{n}
\text{e}^{\beta' Z_{(l)}}
-
\left[ x_0-T_{(m)}\right]
\sum_{l=m+1}^{n}
\text{e}^{\beta' Z_{(l)}}
\right\}.
\end{split}
\]
Constraining $\hll_m$ to be $\theta_0$ on the interval $[x_0,T_{(m+1)})$ yields

\begin{equation}
\label{block_with_m}
\begin{split}
\sum_{i\in B_p\setminus{\{m\}}}
&\left\{
\frac{\Delta_{(i)}}{\hll_i}
-
\left[ T_{(i+1)}-T_{(i)}\right]
\sum_{l=i+1}^{n}
\text{e}^{\beta' Z_{(l)}}
\right\}
\hll_i\\
&+
\Bigg\{
\frac{\Delta_{(m)}}{\hll_m}
-
\left[ x_0-T_{(m)}\right]
\sum_{l=m+1}^{n}
\text{e}^{\beta' Z_{(l)}}
\Bigg\}
\hll_m\\
=&
\theta_0
\sum_{i\in B_p\setminus{\{m\}}}
\left\{
\frac{\Delta_{(i)}}{\hll_i}
-
\left[ T_{(i+1)}-T_{(i)}\right]
\sum_{l=i+1}^{n}
\text{e}^{\beta' Z_{(l)}}
\right\}\\
&+
\theta_0
\left\{
\frac{\Delta_{(m)}}{\hll_m}
-
\left[ x_0-T_{(m)}\right]
\sum_{l=m+1}^{n}
\text{e}^{\beta' Z_{(l)}}
\right\}.
\end{split}
\end{equation}
Once more, for $i\in B_p$, $\min(\hll_i,\theta_0)$ satisfies this relationship.
Summing over all blocks in~\eqref{block_without_m} and~\eqref{block_with_m} completes the proof.
\end{proof}
Similar to the unconstrained estimator, we propose $\hoMLE^0(x)=\hoMLE^0(x;\hb)$ as the constrained estimator and $\hat{v}_{nj}^0=v_{nj}^0(\hb)$, where $\hb$ is the maximum partial likelihood estimator.

\begin{remark}
As already pointed out in~\cite{lopuhaa_nane1}, if we take all covariates equal to zero, the characterization of the unconstrained estimator differs slightly from the characterization of the nondecreasing hazard estimator within the ordinary random censorship model, provided in~\cite{huangwellner:1995}. Correspondingly, the characterizations in Lemma~\ref{lemma:characterization_unconstrained} and~\ref{lemma:characterization_constrained}, with all $Z_l\equiv0$ differ from the characterizations furnished in~\cite{banerjee:2008}. Although the estimators in~\cite{banerjee:2008} do not maximize the (pseudo) loglikelihood function in~\eqref{likelihood} (in the absence of covariates and under the null hypothesis) over nondecreasing $\lambda_0$, the asymptotic distribution of the likelihood ratio test based on these estimators will coincide with our proposed distribution, in the case of no covariates.
\end{remark}

Using the notations in~\cite{banerjee:2008}, let $\text{slogcm}(f,I)$ be the left-hand slope of the greatest convex minorant of the restriction of the real-valued function $f$ to the interval $I$. Denote by $\text{slogcm}(f)=\text{slogcm}(f,\mathbbm{R})$. Moreover, let

\[
\text{slogcm}^0(f)=\min\left( \text{slogcm}(f,(-\infty,0]), 0 \right)1_{(-\infty,0]}+\max\left( \text{slogcm}(f,(0,\infty)), 0 \right)1_{(0,\infty)}.
\]
Furthermore, for positive constants $a$ and $b$, define

\begin{equation}
\label{def:X_a,b}
X_{a,b}(t)=a\mathbbm{W}(t)+bt^2,
\end{equation}
where $\mathbbm{W}$ is a standard two-sided Brownian motion originating from zero. Let

\begin{equation}
\label{def:g_a,b}
g_{a,b}(t)=\text{slogcm}(X_{a,b})(t),
\end{equation}
the left-hand slope of the GCM $G_{a,b}$ of the process $X_{a,b}$, at point $t$. The constrained analogous is defined as follows: for $t\leq 0$, construct the GCM of $X_{a,b}$, that will be denoted by $G_{a,b}^L$ and take its left-hand slopes at point $t$, denoted by $D_L(X_{a,b})(t)$. When the slopes exceed zero, replace them by zero. In the same manner, for $t>0$, denote the GCM of $X_{a,b}$ by $G^R_{a,b}$ and its slopes at point $t$ by $D_R(X_{a,b})(t)$. Replace the slopes by zero when they decrease below zero. This slope process will be denoted by $g_{a,b}^0$, which is thus given by

\begin{equation}
\label{def:g_a,b^0}
g_{a,b}^0(t)=
\begin{cases}
\min\left( D_L(X_{a,b})(t),0\right)  & t < 0,\\
0 &  t=0,\\
\max \left( D_R(X_{a,b})(t),0\right) & t>0.
\end{cases}
\end{equation}
Note that for $t\leq 0$, there exists, almost surely $s<0$ such that $D_L(X_{a,b})(s)$ is strictly positive for any point greater than or equal to $s$ and the left derivative at $s$ is non-positive. Equivalently, for $t>0$ there exists almost surely $s>0$ such that $D_R(X_{a,b})(s)$ is strictly negative for any point smaller than or equal to $s$ and the left derivative at $s$ is non-negative.
In addition, observe that $g_{a,b}^0(t)=\text{slogcm}^0(X_{a,b})(t)$, as defined and characterized in~\cite{banerjeewellner:2001}.

\subsection{Nonincreasing baseline hazard}
\label{sec:nonincreasing}
The characterization of the unconstrained and the constrained NPMLE estimators of a nonincreasing baseline hazard function follows analogously to the characterization of the nondecreasing estimators. The unconstrained NPMLE $\hoMLE(x;\beta)$ is obtained by maximizing the (pseudo) likelihood function in~\eqref{likelihood} over all $\lambda_0(T_{(1)})\geq\ldots\geq\lambda(T_{(n)})\geq0$. As derived in~\cite{lopuhaa_nane1}, the likelihood is maximized by a nonincreasing step function that is constant on $(T_{(i-1)},T_{(i)}]$, for $i=1,\ldots,n$ and where $T_{(0)}=0$. The (pseudo) loglikelihood in~\eqref{likelihood} becomes then

\begin{equation}
\label{likelihood_decreasing}
\sum_{i=1}^{n}
\left\{
\Delta_{(i)}\log\lambda_0(T_{(i)})
-
\lambda_0(T_{(i)})
\left[ T_{(i)}-T_{(i-1)}\right]
\sum_{l=i}^n
\text{e}^{\beta' Z_{(l)}}
\right\}.
\end{equation}
The lemmas below provide the characterization of the unconstrained estimator $\hoMLE(x;\beta)$ and the constrained estimator $\hoMLE^0(x;\beta)$. Their proofs follow by arguments similar to those in the proofs of Lemma~\ref{lemma:characterization_unconstrained} and Lemma~\ref{lemma:characterization_constrained}, as well as the necessary and sufficient conditions that uniquely characterize these estimators..

\begin{lemma}
\label{lemma:characterization_unconstrained_dec}
Let $T_{(1)}<\ldots<T_{(n)}$ be the ordered follow-up times and consider a fixed $\beta\in\R^p$.
\begin{enumerate}[(i)]
\item
Let $W_n$ be defined in~\eqref{def:W_n} and
let
\begin{equation}
\label{def:Y_n}
\bar{V}_n(x)=\int\delta\{u\leq x\}\,\mathrm{d}\mathbb{P}_n(u,\delta,z).
\end{equation}
Then, the NPMLE $\hoMLE(x;\beta)$ of a nonincreasing baseline hazard function $\lambda_0$ is given by

\[
\hoMLE(x;\beta)
=
\begin{cases}
\hl_i & T_{(i-1)}< x \leq T_{(i)}, \text{ for }i=1,\ldots,n,\\
0 &  x> T_{(n)},\\
\end{cases}
\]
for $i=1,\ldots,n$, with $T_{(0)}=0$ and where $\hl_i$ is the left derivative of the least concave majorant (LCM) at the point $P_i$
of the cumulative sum diagram consisting of the points
\begin{equation}
\label{def:points_Pj_dec}
P_j=\Big( W_n(\beta,T_{(j)}), \bar{V}_n(T_{(j)}) \Big),
\end{equation}
for $j=1,\ldots,n$ and $P_0=(0,0)$.
\item
Let $B_1,\ldots,B_k$ be blocks of indices such that $\hoMLE(x;\beta)$ is constant on each block and $B_1\cup\ldots\cup B_k=\{1,\ldots,n\}$. Denote by $v_{nj}(\beta)$, the value of the estimator on block $B_j$. Then

\[
v_{nj}(\beta)
=
\frac{\sum_{i\in B_j}\Delta_{(i)}}{\sum_{i\in B_j}\left[ T_{(i)}-T_{(i-1)}\right]
\sum_{l=i}^{n}
\text{e}^{\beta' Z_{(l)}}}.
\]
\end{enumerate}
\end{lemma}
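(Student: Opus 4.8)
The plan is to mirror the two-part proof of Lemma~\ref{lemma:characterization_unconstrained} almost verbatim, adapting every step to the nonincreasing setting, where the crucial change is that the monotonicity constraint reverses and the relevant envelope becomes the least concave majorant rather than the greatest convex minorant. First I would record, as in the proof of Lemma~\ref{lemma:characterization_unconstrained}, that any candidate maximizer of the reduced (pseudo) loglikelihood~\eqref{likelihood_decreasing} must be a nonincreasing step function, constant on $(T_{(i-1)},T_{(i)}]$ with $T_{(0)}=0$, and zero for $x>T_{(n)}$; setting $\lambda_i=\lambda_0(T_{(i)})$ and $\lambda=(\lambda_1,\dots,\lambda_n)$ reduces the problem to maximizing
\[
\phi(\lambda)=\sum_{i=1}^{n}\left\{\Delta_{(i)}\log\lambda_i-\lambda_i\left[T_{(i)}-T_{(i-1)}\right]\sum_{l=i}^n\text{e}^{\beta'Z_{(l)}}\right\}
\]
over $\lambda_1\geq\lambda_2\geq\dots\geq\lambda_n\geq0$. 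Part (i) is then the standard isotonic-regression-type identification: the maximizer of a concave separable function over a decreasing cone is given by the left-hand slopes of the LCM of the cumulative sum diagram formed by the points $P_j=\bigl(W_n(\beta,T_{(j)}),\bar V_n(T_{(j)})\bigr)$, which is exactly the assertion; this also follows directly from Lemma~1 in~\cite{lopuhaa_nane1}, so I would cite that for (i) and concentrate on (ii).

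For part (ii) I would run the Karush--Kuhn--Tucker / Fenchel argument exactly as in Lemma~\ref{lemma:characterization_unconstrained}, but with the constraint functions $g_i(\lambda)=\lambda_{i+1}-\lambda_i$ for $i=1,\dots,n-1$ and $g_n(\lambda)=-\lambda_n$ (encoding $\lambda_1\geq\cdots\geq\lambda_n\geq0$), and with the sign of $\nabla\phi$ tracked accordingly. The KKT stationarity condition gives multipliers $\alpha_i$ that are now \emph{partial sums from the left}, $\alpha_i=\sum_{j\leq i}\nabla_j\phi(\hl)$ (the mirror image of~\eqref{alpha_i}), together with the complementary slackness relations. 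These yield the two Fenchel conditions characterizing $\hoMLE(x;\beta)$: $\sum_{j\leq i}\nabla_j\phi(\hl)\geq 0$ for $i=1,\dots,n$, and $\sum_{i=1}^{n}\bigl(\hl_i-\hl_{i+1}\bigr)\sum_{j\leq i}\nabla_j\phi(\hl)=0$ with $\hl_{n+1}=0$. As in the nondecreasing case, combining these produces the "extra" identity $\sum_{j=1}^{n}\nabla_j\phi(\hl)=0$ (here one multiplies the leftmost partial-sum inequality by $\hl_n$ and telescopes, using monotonicity), and then on any block $B_j$ on which $\hl$ is constant the multipliers at the two endpoints of the block vanish, so that $\sum_{i\in B_j}\nabla_i\phi(\hl)=0$. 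Writing out $\nabla_i\phi$ and solving for the common value $v_{nj}(\beta)$ gives precisely the claimed formula.

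The main obstacle, such as it is, is purely bookkeeping: one must be careful that the index conventions differ from the nondecreasing case. Here the sums over follow-up times run from $i$ to $n$ (not $i+1$ to $n$), the spacings are $T_{(i)}-T_{(i-1)}$ (not $T_{(i+1)}-T_{(i)}$), the block partition is of $\{1,\dots,n\}$ rather than $\{1,\dots,n-1\}$, and there is no arbitrary "$\infty$ at the right end" value — instead $\hl_n$ is finite and $\hoMLE$ drops to $0$ beyond $T_{(n)}$, which is what lets the telescoping argument for the extra identity close cleanly. Since the paper states that the proof "follows by arguments similar to those in the proofs of Lemma~\ref{lemma:characterization_unconstrained} and Lemma~\ref{lemma:characterization_constrained}," I would simply indicate that the concavity of $\phi$, the KKT/Fenchel characterization, and the block-wise solution proceed mutatis mutandis, and display only the reversed Fenchel conditions and the final reduction to $v_{nj}(\beta)$, leaving the routine sign-tracking to the reader.
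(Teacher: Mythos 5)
Your plan coincides with the paper's: the paper gives no separate proof of this lemma and simply asserts that it follows by mirroring the arguments of Lemmas~\ref{lemma:characterization_unconstrained} and~\ref{lemma:characterization_constrained}, which is exactly what you propose (reduce to maximizing the separable concave $\phi$ over the cone $\lambda_1\geq\cdots\geq\lambda_n\geq0$, cite Lemma~1 of~\cite{lopuhaa_nane1} for (i), run KKT for (ii), and read off the block values from $\sum_{i\in B_j}\nabla_i\phi(\hl)=0$). The index bookkeeping you flag ($T_{(i)}-T_{(i-1)}$, sums from $l=i$ to $n$, blocks partitioning $\{1,\ldots,n\}$) is handled correctly, and the final formula for $v_{nj}(\beta)$ is right.

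There is, however, one concrete sign error in your stated Fenchel conditions. With $g_i(\lambda)=\lambda_{i+1}-\lambda_i$ for $i<n$ and $g_n(\lambda)=-\lambda_n$, the stationarity condition $\nabla\tilde\phi(\hl)+G^T\alpha=0$ gives $\nabla_j\phi(\hl)=\alpha_{j-1}-\alpha_j$ (with $\alpha_0=0$), hence $\alpha_i=-\sum_{j\leq i}\nabla_j\phi(\hl)$, so that $\alpha_i\geq0$ forces $\sum_{j\leq i}\nabla_j\phi(\hl)\leq 0$ for $i=1,\ldots,n$ --- not $\geq 0$ as you wrote. (Equivalently: the LCM lies above the cumulative sum diagram, so the running sums of $\nabla_j\phi$ must be nonpositive; the condition $\geq0$ would instead characterize the greatest convex minorant, i.e.\ the nondecreasing solution, exactly as in the paper's~\eqref{cond:extra1}.) The minus sign in~\eqref{alpha_i} survives the mirroring; only the direction of summation flips. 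This slip does not damage part (ii), since the block identity $\sum_{i\in B_j}\nabla_i\phi(\hl)=0$ uses only the vanishing of $\alpha$ at block boundaries (complementary slackness at strict decreases of $\hl$, plus $\alpha_n\hl_n=0$ at the right end) and is insensitive to the sign of the inequality; but if you display the Fenchel conditions as part of the verification of (i), the inequality must be reversed, otherwise you are certifying the wrong envelope.
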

In fact, for $x\geq T_{(n)}$, $\hoMLE(x;\beta)$ can take any value smaller than $\hl_n$, the left derivative of the LCM at the point $P_n$ of the CSD. As before, we propose $\hoMLE(x)=\hoMLE(x;\hb)$ as the estimator of $\lambda_0$ and $\hat{v}_{nj}=v_{nj}(\hb)$,
where $\hb$ denotes the maximum partial likelihood estimator of $\beta_0$. Fenchel conditions as in~\eqref{cond:fenchel1} and~\eqref{cond:fenchel2} can be derived analogously.

The NPMLE estimator $\hoMLE^0$ maximizes the (pseudo) loglikelihood function in~\eqref{likelihood_decreasing} over the set $\lambda_0(T_{(1)})\geq\ldots\geq\lambda_0(T_{(m)})\geq\theta_0\geq\lambda_0(T_{(m+1)})\geq\ldots\geq\lambda_0(T_{(n)})\geq0$. It can be argued that the constrained estimator has to be a nonincreasing step function that is constant on $(T_{(i-1)},T_{(i)}]$, for $i=1,\ldots,n$, is $\theta_0$ on the interval $(T_{(m)},x_0]$, and is zero for $x\geq T_{(n)}$. Hence, the (pseudo) loglikelihood function becomes

\[
\begin{split}
\sum_{i=1}^{m}
&\left\{
\Delta_{(i)}\log\lambda_0(T_{(i)})
-
\lambda_0(T_{(i)})
\left[ T_{(i)}-T_{(i-1)}\right]
\sum_{l=i}^n
\text{e}^{\beta' Z_{(l)}}
\right\}\\
&+\Delta_{(m+1)}\log\lambda_0(T_{(m+1)})
-
\theta_0
\left[ x_0-T_{(m)}\right]
\sum_{l=m+1}^n
\text{e}^{\beta' Z_{(l)}}\\
&-
\lambda_0(T_{(m+1)})
\left[ T_{(m+1)}-x_0\right]
\sum_{l=m+1}^n
\text{e}^{\beta' Z_{(l)}}\\
&+
\sum_{i=m+2}^{n}
\left\{
\Delta_{(i)}\log\lambda_0(T_{(i)})
-
\lambda_0(T_{(i)})
\left[ T_{(i)}-T_{(i-1)}\right]
\sum_{l=i}^n
\text{e}^{\beta' Z_{(l)}}
\right\}.
\end{split}
\]
The characterization of the constrained NPMLE $\hoMLE^0$ is provided with the next lemma.

\begin{lemma}
\label{lemma:characterization_constrained_dec}
Let $x_0\in(0,\tau_H)$ fixed, such that $T_{(m)}< x_0< T_{(m+1)}$, for a given $1\leq m\leq n-1$. Consider a fixed $\beta\in\R^p$.
\begin{enumerate}[(i)]
\item
For $i=1,\ldots,m$, let $\hll_i$ to be the left derivative of the LCM at the point $P_i^L$ of the CSD consisting of the points $P_j^L=P_j$, for $j=1,\ldots,m$, with $P_j$ defined in~\eqref{def:points_Pj_dec}, and $P_0^L=(0,0)$. Moreover, for $i=m+1,\ldots,n$, let $\hlr_i$ be the left derivative of the LCM at the point $P_i^R$ of the CSD consisting of the points $P_j^R=P_j$, for $j=m,m+1\ldots,n$, with $P_j$ defined in~\eqref{def:points_Pj_dec}. Then, the NPMLE $\hoMLE^0(x;\beta)$ of a nonincreasing baseline hazard function $\lambda_0$, under the null hypothesis $H_0:\lambda_0=\theta_0$, is given by

\begin{equation}
\label{def:constrained_estim_dec}
\hoMLE^0(x;\beta)
=
\begin{cases}
\hlo_i & T_{(i-1)}< x \leq T_{(i)}, \text{ for }i=1,\ldots,m,m+2,\ldots,n,\\
\theta_0 & T_{(m)}< x\leq x_0,\\
\hlo_{m+1} & x_0< x \leq T_{(m+1)},\\
0 &  x> T_{(n)},\\
\end{cases}
\end{equation}
where $T_{(0)}=0$ and where $\hlo_i=\max(\hll_i,\theta_0)$, for $i=1,\ldots,m$, and $\hlo_i=\min(\hlr_i,\theta_0)$, for $i=m+1,\ldots,n$.
\item
For $k\geq1$, let $B_1^0,\ldots,B_k^0$ be blocks of indices such that $\hoMLE^0(x;\beta)$ is constant on each block and $B_1^0\cup\ldots\cup B_k^0=\{1,\ldots,n\}$. There is one block, say $B_r^0$, on which $\hoMLE^0(x;\beta)$ is $\theta_0$, and one block, say $B_p^0$, that contains $m+1$. On all other blocks $B_j^0$, denote by $v_{nj}^0(\beta)$ the value of $\hoMLE^0(x;\beta)$ on block $B_j^0$. Then,

\[
v_{nj}^0(\beta)=\frac{\sum_{i\in B^0_j}\Delta_{(i)}}{\sum_{i\in B_j^0}\left[ T_{(j)}-T_{(j-1)}\right]
\sum_{l=j}^{n}
\text{e}^{\beta' Z_{(l)}}}.
\]
On the block $B_p^0$, that contains $m+1$,

\[
\begin{split}
&v_{np}^0(\beta)\\
&=\frac{\sum_{i\in B_p^0}\Delta_{(i)}}{\sum_{i\in B_p^0\setminus\{m+1\}}\left[T_{(i)}-T_{(i-1)}\right]\sum_{l=i+1}^{n}\text{e}^{\beta'Z_{(l)}}+[T_{(m+1)}-x_0]\sum_{l=m+1}^{n}\text{e}^{\beta'Z_{(l)}}}.
\end{split}
\]
\end{enumerate}
\end{lemma}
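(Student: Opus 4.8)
The plan is to mirror the proof of Lemma~\ref{lemma:characterization_constrained}, working with the reduced (pseudo) loglikelihood displayed just above the statement. Writing $\lambda_i=\lambda_0(T_{(i)})$ and collecting terms, the function to be maximized is
\[
\phi^0(\lambda)
=
\sum_{\substack{i=1}}^{n}
\Delta_{(i)}\log\lambda_i
-
\sum_{\substack{i=1\\ i\neq m+1}}^{n}
\lambda_i\left[T_{(i)}-T_{(i-1)}\right]\sum_{l=i}^{n}\text{e}^{\beta'Z_{(l)}}
-
\lambda_{m+1}\left[T_{(m+1)}-x_0\right]\sum_{l=m+1}^{n}\text{e}^{\beta'Z_{(l)}},
\]
over the set $\lambda_1\geq\ldots\geq\lambda_m\geq\theta_0\geq\lambda_{m+1}\geq\ldots\geq\lambda_n\geq0$. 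As in the nondecreasing case, $\phi^0$ is concave, its gradient has the usual form for $i\neq m+1$ and the modified form $\nabla_{m+1}\phi^0(\lambda)=\Delta_{(m+1)}/\lambda_{m+1}-[T_{(m+1)}-x_0]\sum_{l=m+1}^{n}\text{e}^{\beta'Z_{(l)}}$ for the $(m+1)$-th coordinate. First I would set up the constraint functions $g_i$ (now of the form $\lambda_{i+1}-\lambda_i$ on the left part, $\lambda_m-\theta_0$ and $\theta_0-\lambda_{m+1}$ at the junction, $\lambda_i-\lambda_{i-1}$ on the right part, plus $-\lambda_n$ for nonnegativity) and apply the Karush--Kuhn--Tucker characterization, Theorem~8.1 in~\cite{groeneboom_course}, exactly as was done for~\eqref{cond:KKT1}--\eqref{cond:KKT3}.

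Second, I would translate the KKT conditions into explicit Fenchel-type inequalities: partial sums of $\nabla_j\phi^0(\hl^c)$ over $j\leq i$ are of one sign for $i\leq m$, partial sums over $j\geq i$ are of the opposite sign for $i\geq m+1$, together with the complementary-slackness equality that mixes the two halves and isolates the $\theta_0$ constraint. The multipliers satisfy $\alpha_i=\pm\sum_{j}\nabla_j\phi^0(\hl^c)$ (the sign and the range of summation depending on whether $i\le m$ or $i\ge m+1$), and $\alpha_i=0$ whenever the corresponding monotonicity inequality is strict; this gives part~(ii), including the special formula~$v_{np}^0$ on the block containing $m+1$, where the term $[T_{(m+1)}-x_0]\sum_{l}\text{e}^{\beta'Z_{(l)}}$ enters instead of $[T_{(m+1)}-T_{(m)}]\sum_{l}\text{e}^{\beta'Z_{(l)}}$. (I note in passing that the summation index $\sum_{l=i+1}^{n}$ appearing in the displayed $v_{np}^0$ should read $\sum_{l=i}^{n}$, matching the unconstrained decreasing formula; I would state it in the corrected form.)

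Third, for part~(i) I would verify that the candidate estimator~\eqref{def:constrained_estim_dec} satisfies these conditions. The key observation, as in the nondecreasing case, is that $\hll_i$ is the unconstrained nonincreasing NPMLE built only from $T_{(1)},\ldots,T_{(m)}$ and $\hlr_i$ the unconstrained nonincreasing NPMLE built only from $T_{(m)},\ldots,T_{(n)}$, so each already satisfies the one-sided Fenchel relations of Lemma~\ref{lemma:characterization_unconstrained_dec}. Taking $\max(\hll_i,\theta_0)$ on the left and $\min(\hlr_i,\theta_0)$ on the right only improves the relevant inequalities in the correct direction (because replacing $\hll_j$ by the larger value $\max(\hll_j,\theta_0)$ decreases $\Delta_{(j)}/\hll_j$, hence decreases the partial sums, which is what is needed on the left, and symmetrically on the right), and the analogue of~\eqref{cond:extra}/\eqref{cond:extra1} upgrades the one-sided slope inequality into the form that also covers the boundary index $m$. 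The complementary-slackness equality is checked block by block, exactly as in~\eqref{block_without_m}--\eqref{block_with_m}: on blocks not touching $m+1$ it reduces to the unconstrained block identity, and on the block containing $m+1$ one uses that constraining the hazard to equal $\theta_0$ on $(T_{(m)},x_0]$ transfers the contribution $[x_0-T_{(m)}]\sum_{l=m+1}^{n}\text{e}^{\beta'Z_{(l)}}$ from the free part to the $\theta_0$-block, leaving precisely the stated identity. Summing over all blocks gives~\eqref{cond:fenchel2_constr}'s analogue, completing the proof.

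The main obstacle is purely bookkeeping rather than conceptual: keeping the index shifts straight. In the nonincreasing parametrization the estimator is constant on left-closed intervals $(T_{(i-1)},T_{(i)}]$ and the weights involve $\sum_{l=i}^{n}\text{e}^{\beta'Z_{(l)}}$ rather than $\sum_{l=i+1}^{n}$, the junction sits between indices $m$ and $m+1$ (so the "split" index is $m+1$, not $m$), and the modified gradient coordinate is the $(m+1)$-th, not the $m$-th. One must be careful that the extra interval $(T_{(m)},x_0]$ on which the estimator equals $\theta_0$ is bundled with the $\theta_0$-block and that the remaining piece $(x_0,T_{(m+1)}]$ carries weight $[T_{(m+1)}-x_0]\sum_{l=m+1}^{n}\text{e}^{\beta'Z_{(l)}}$. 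Once these conventions are fixed consistently, every displayed inequality and identity is the exact mirror image of its counterpart in the proof of Lemma~\ref{lemma:characterization_constrained}, so I would present the argument by pointing to those parallels rather than re-deriving each algebraic step.
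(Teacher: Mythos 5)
Your proposal is correct and follows exactly the route the paper intends: the paper omits this proof, stating only that it follows by arguments analogous to Lemmas~\ref{lemma:characterization_unconstrained} and~\ref{lemma:characterization_constrained}, and your KKT/Fenchel mirror-image argument (with the junction at index $m+1$, the weight $[T_{(m+1)}-x_0]\sum_{l=m+1}^{n}\mathrm{e}^{\beta'Z_{(l)}}$, and the max/min truncation at $\theta_0$ moving the partial sums in the right direction) is precisely that analogy. Your observation that the summation index in the displayed $v_{np}^0$ should be $\sum_{l=i}^{n}$ rather than $\sum_{l=i+1}^{n}$ (to match the nonincreasing weights of Lemma~\ref{lemma:characterization_unconstrained_dec}) is also correct.
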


Evidently, we propose $\hoMLE^0(x)=\hoMLE^0(x;\hb)$ as the constrained estimator of a nonincreasing baseline hazard function $\lambda_0$, as well as $\hat{v}_{nj}^0=v_{nj}^0(\hb)$ on blocks of indices where the estimator is constant. The Fenchel conditions corresponding to~\eqref{cond:fenchel1_constr}-\eqref{cond:fenchel2_constr} can be derived in the same manner as for the constrained estimator in the nondecreasing case.

Let $\text{slolcm}(f,I)$ be the left-hand slope of the LCM of the restriction of the real-valued function $f$ to the interval $I$. Denote by $\text{slolcm}(f)=\text{slolcm}(f,\R)$. For $a,b>0$, let $\bar{X}_{a,b}(t)=a\mathbbm{W}(t)-bt^2$, where $\mathbbm{W}$ is a standard two-sided Brownian motion originating from zero. Denote by $L_{a,b}$ the LCM of $\bar{X}_{a,b}$ and let

\begin{equation}
\label{def:l_a,b}
l_{a,b}(t)=\text{slolcm}(\bar{X}_{a,b})(t),
\end{equation}
be the left-hand slope of $L_{a,b}$, at point $t$. Additionally, set

\[
\text{slolcm}^0(f)=\max\left( \text{slolcm}(f,(-\infty,0]), 0 \right)1_{(-\infty,0]}+\min\left( \text{slolcm}(f,(0,\infty)), 0 \right)1_{(0,\infty)}.
\]
For $t\leq 0$, construct the LCM of $\bar{X}_{a,b}$, that will be denoted by $L_{a,b}^L$ and take its left-hand slope at point $t$, denoted by $D_L(\bar{X}_{a,b})(t)$. When the slopes fall behind zero, replace them by zero. In the same manner, for $t>0$, denote the LCM of $\bar{X}_{a,b}$ by $L^R_{a,b}$ and its slope at point $t$ by $D_R(\bar{X}_{a,b})(t)$. Replace the slopes by zero when they exceed zero. This slope process will be denoted by $l_{a,b}^0$, which is thus given by

\begin{equation}
\label{def:l_a,b^0}
l_{a,b}^0(t)=
\begin{cases}
\max\left( D_L(\bar{X}_{a,b})(t),0\right)  & t < 0,\\
0 &  t=0,\\
\min \left( D_R(\bar{X}_{a,b})(t),0\right) & t>0.
\end{cases}
\end{equation}
Observe that $l_{a,b}^0(t)=\text{slolcm}^0(\bar{X}_{a,b})(t)$.

\section{The limit distribution}
\label{sec:limit_distr}
\noindent
Let $B_{loc}(\mathbbm{R})$ be the space of all locally bounded real functions on $\mathbbm{R}$, equipped with the topology of uniform convergence on compact sets. In addition, $\mathbbm{C}_{min}(\mathbbm{R})$ is defined as the subset of $B_{loc}(\mathbbm{R})$ consisting of continuous functions $f$ for which $f(t)\to\infty$, when $|t|\to\infty$ and $f$ has a unique minimum.
Let $\mathcal{L}$ be the space of locally square integrable real-valued functions on $\mathbbm{R}$, equipped with the topology of $L_2$ convergence on compact sets.\\
For a generic follow-up time $T$, consider $H^{uc}(x)=\mathbbm{P}(T\leq x,\Delta=1)$, the sub-distribution function of the uncensored observations. Moreover, let

\begin{equation}
\label{def:Phi}
\Phi(\beta,x)
=
\int \{u\geq x\}\, \text{e}^{\beta' z}\, \mathrm{d}P(u,\delta,z),
\end{equation}
for $\beta\in\R^p$ and $x\in\R$,
where $P$ is the underlying probability measure corresponding to the distribution of $(T,\Delta,Z)$. For a fixed point $x_0\in (0,\tau_H)$, define the processes

\begin{equation}
\label{processes_neighborhood}
\begin{split}
X_n(x)=n^{1/3}\left( \hoMLE(x_0+n^{-1/3}x) -\theta_0\right),\\
Y_n(x)=n^{1/3}\left( \hoMLE^0(x_0+n^{-1/3}x) -\theta_0\right).
\end{split}
\end{equation}
The following lemma provides the joint asymptotic distribution of the above processes.

\begin{lemma}
\label{lemma:distr_processes}
Assume (A1) and (A2) and let $x_0\in(0,\tau_H)$. Suppose that $\lambda_0$ is nondecreasing on $[0,\infty)$ and continuously differentiable in a neighborhood of $x_0$, with $\lambda_0(x_0)\neq0$ and $\lambda_0'(x_0)>0$. Moreover, assume that the functions $x\mapsto\Phi(\beta_0,x)$ and $H^{uc}(x)$, defined in~\eqref{def:Phi} and above~\eqref{def:Phi}, are continuously differentiable in a neighborhood of $x_0$. Finally, assume that the density of the follow-up times is continuous and bounded away from zero in a neighborhood of $x_0$. Define

\begin{equation}
\label{def:a_b}
a=\sqrt{\frac{\lambda_0(x_0)}{\Phi(\beta_0,x_0)}}\qquad \text{and} \qquad b=\frac{1}{2}\lambda_0'(x_0).
\end{equation}
Then $( X_n,Y_n )$ converges jointly to $( g_{a,b},g_{a,b}^0 )$,
in $\mathcal{L}\times \mathcal{L}$, where the processes $g_{a,b}$ and $g_{a,b}^0$ have been defined in \eqref{def:g_a,b} and \eqref{def:g_a,b^0}.
\end{lemma}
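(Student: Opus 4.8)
The plan is to follow the standard Groeneboom-type ``switch'' / localization argument for shape-constrained estimators, adapted to the Cox setting. First I would introduce the localized inverse processes associated with the cumulative sum diagrams in Lemmas~\ref{lemma:characterization_unconstrained} and~\ref{lemma:characterization_constrained}. Recall that $\hoMLE(x;\hb)$ is a left derivative of the GCM of the CSD through the points $P_j$ in~\eqref{def:points_Pj}, while $\hoMLE^0(x;\hb)$ is built from the two one-sided GCMs through $P_j^L$ ($j\le m$) and $P_j^R$ ($j\ge m$), truncated at $\theta_0$ from above on the left and from below on the right (via $\min$ and $\max$). Passing to the piecewise-constant interpolation of the CSD, define for $x$ in a fixed compact set the candidate processes
\[
\hat U_n(x)=\text{argmin}_{t}\Bigl\{\mathbb{V}_n\bigl(x_0+n^{-1/3}t\bigr)-\bigl(\theta_0+n^{-1/3}x\bigr)\,\mathbb{W}_n\bigl(x_0+n^{-1/3}t\bigr)\Bigr\},
\]
where $\mathbb{V}_n,\mathbb{W}_n$ are the piecewise-linear versions of $V_n$ (or $\bar V_n$) and $W_n(\hb,\cdot)$, and similarly one-sided argmins for the constrained estimator restricted to $t\le 0$ and $t\ge 0$. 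The switch relation then reads $\{X_n(x)\le y\}=\{\hat U_n(x)\le \text{(inverse localized version of }y)\}$ and analogously for $Y_n$, so the joint law of $(X_n,Y_n)$ is controlled by the joint law of the localized argmin/argmax processes.

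Second I would establish the weak convergence of the localized drift-plus-noise process. Using assumption (A2) together with consistency of $\hb$ (from the maximum partial likelihood theory cited), $W_n(\hb,x_0+n^{-1/3}t)-W_n(\hb,x_0)$ has a deterministic part that, after the $n^{1/3}$ and $n^{2/3}$ scalings, converges to $\Phi(\beta_0,x_0)\,t$ plus a negligible remainder, by the assumed $C^1$ smoothness of $x\mapsto\Phi(\beta_0,x)$ near $x_0$; the analogous expansion for $V_n$ (resp.\ $\bar V_n$) produces a drift governed by $H^{uc\prime}$, and combining these with a change-of-variable to express $V_n$ as a function of $W_n$ yields, after centering at $\theta_0$ and the parabolic term coming from $\lambda_0'(x_0)>0$, exactly the parabola $b t^2$ with $b=\tfrac12\lambda_0'(x_0)$. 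The stochastic part is a local empirical process increment: by a standard modulus-of-continuity / Donsker-type argument (e.g.\ the rate lemmas in the Grenander-type literature, and as done in Lopuha\"a--Nane), $n^{2/3}$ times the centered fluctuation of the integrand on $O(n^{-1/3})$-neighborhoods converges to a Brownian motion with variance constant $\lambda_0(x_0)/\Phi(\beta_0,x_0)$, i.e.\ $a^2$ with $a$ as in~\eqref{def:a_b}. Thus the localized process converges in $B_{loc}(\mathbbm{R})$ to $X_{a,b}(t)=a\mathbbm{W}(t)+bt^2$ of~\eqref{def:X_a,b}, uniformly on compacta.

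Third I would invoke continuity of the relevant ``slope-of-GCM'' maps. Since $X_{a,b}\in\mathbb{C}_{min}(\mathbbm{R})$ almost surely and has a.s.\ a unique minimizing configuration, the map sending a function to the left-hand slope of its GCM (globally, and separately on $(-\infty,0]$ and on $[0,\infty)$) is continuous at $X_{a,b}$ in the appropriate topologies; the truncation operations $\min(\cdot,0)$, $\max(\cdot,0)$ are plainly continuous. Applying the continuous mapping theorem jointly, the global slope converges to $g_{a,b}$ of~\eqref{def:g_a,b} and the truncated two-sided slope converges to $g_{a,b}^0$ of~\eqref{def:g_a,b^0}, and because both are built from the \emph{same} limiting process $X_{a,b}$, the convergence of $(X_n,Y_n)$ to $(g_{a,b},g_{a,b}^0)$ is joint. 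Finally I would upgrade from uniform-on-compacta convergence of the slope processes to convergence in $\mathcal{L}\times\mathcal{L}$ (i.e.\ $L_2$ on compacta): this follows because the slopes are monotone and locally bounded, so uniform convergence on compacta of the GCMs plus the a.s.\ absence of a flat stretch of $G_{a,b}$ at the (finitely many, in any compact) kinks gives $L_2$ convergence by dominated convergence.

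The main obstacle I expect is the rigorous localization of the \emph{constrained} estimator $\hoMLE^0$: one must show that the global constrained solution in~\eqref{def:constrained_estim} localizes to the two-sided truncated GCM of the \emph{same} noise process used for the unconstrained one, i.e.\ that the split at $m$ (with $T_{(m)}<x_0<T_{(m+1)}$) and the $\theta_0$-truncation survive the scaling limit exactly as $\text{slogcm}^0$. This requires (i) controlling the error from replacing $x_0$ by $T_{(m)}$ in the $\nabla_m\phi^0$ term of Lemma~\ref{lemma:characterization_constrained}, which is $O(T_{(m+1)}-T_{(m)})=O_p(n^{-1})$ hence negligible at the $n^{1/3}$ scale, and (ii) verifying that the a.s.\ behavior of $X_{a,b}$ near $0$ noted after~\eqref{def:g_a,b^0} (existence of $s<0$ with $D_L$ eventually positive, etc.) guarantees the one-sided argmins are $O_p(1)$ so that no ``escape to infinity'' occurs — this is what forces joint tightness of $(X_n,Y_n)$. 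The nondecreasing case is written out in full; the nonincreasing case is identical after replacing GCM by LCM, $V_n$ by $\bar V_n$, and $X_{a,b}$ by $\bar X_{a,b}$, so I would simply remark that the same argument applies verbatim.
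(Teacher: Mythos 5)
Your proposal is correct in outline and shares the paper's overall architecture (localize the cumulative sum diagram around $x_0$ at rate $n^{-1/3}$, show the localized drift-plus-noise process converges to $X_{a,b}$, then pass to slopes), but the final step is genuinely different. The paper first reduces convergence in $\mathcal{L}\times\mathcal{L}$ to convergence of finite-dimensional marginals by exploiting the monotonicity of $X_n$ and $Y_n$ (Corollary 2 of Huang and Zhang), and then obtains each marginal through the switching relation and the inverse processes $U_n$, $U_n^L$, $U_n^R$, applying Kim and Pollard's argmax continuous mapping theorem to the localized criterion $\mathbbm{Z}_n(x)-S_n(x)z$ (suitably extended to all of $\mathbbm{R}$); joint convergence is then supplied by Theorem 6.1 of Huang and Wellner together with the finite-dimensional extension arguments from Banerjee's thesis. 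You instead apply the continuous mapping theorem directly to the slope-of-GCM functionals $\mathrm{slogcm}$ and $\mathrm{slogcm}^0$ evaluated at the single localized process, and then upgrade uniform-on-compacta convergence of slopes to $L_2$ on compacta. Your route is viable and arguably more transparent about why the convergence is joint (both functionals act on the same process), but it shifts the technical burden onto a continuity/localization lemma for the slope-of-GCM map: the GCM over the full (growing) observation window is not determined by the restriction of the process to a fixed compact, so one must show that, with probability tending to one, the slopes on $[-k,k]$ depend only on the process on some fixed $[-K,K]$ — precisely the $O_p(1)$ control of touch points and one-sided argmins that you flag as the main obstacle. The paper's switching-relation route avoids stating such a continuity lemma explicitly (it is absorbed into the hypotheses of the argmax theorem and the rate lemmas of Lopuha\"a and Nane), and its reduction to finite-dimensional marginals via monotonicity is cheaper than your dominated-convergence upgrade. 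Neither route is a shortcut around the other's hard step; they are reorganizations of the same estimates, and your treatment of the constrained estimator (negligibility of the $x_0$ versus $T_{(m)}$ discrepancy at scale $n^{-1/3}$, and the $\min/\max$ truncation commuting with the limit) matches the paper's intermediate process $\widetilde{Y}_n$ and the representation \eqref{def:Y_n}.
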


\begin{proof}
Note that the processes $X_n$ and $Y_n$ are monotone. By making use of Corollary 2 in~\cite{huangzhang:1994} and the remark above the corollary, it suffices to prove that the finite dimensional marginals of the process $(X_n,Y_n)$ converge to the finite dimensional marginals of the process $(g_{a,b},g_{a,b}^0)$, in order to prove the lemma.

For $x\geq T_{(1)}$, let

\[
\widehat{W}_n(x)=W_n(\hb,x)-W_n(\hb,T_{(1)}),
\]
where $W_n$ is defined in~\eqref{def:W_n}, and where~$\hb$ is the maximum partial likelihood estimator. For fixed $x_0$ and $x\in[-k,k]$, with $0<k<\infty$, define the process

\begin{equation}
\label{def:Zn}
\begin{split}
\mathbbm{Z}_n(x)
=
\frac{n^{2/3}}{\Phi(\beta_0,x_0)} \Big\{
V_n(x_0&+n^{-1/3}x)-V_n(x_0)\\
&-\lambda_0(x_0)\left[\widehat{W}_n(x_0+n^{-1/3}x)-\widehat{W}_n(x_0)\right]
\Big\},
\end{split}
\end{equation}
where $V_n$ is defined in~\eqref{def:V_n}.
For $a$ and $b$ defined in \eqref{def:a_b}, $\mathbbm{Z}_n$ converges weakly to $X_{a,b}$, as processes in $B_{loc}(\mathbbm{R})$, by Lemma 8 in~\cite{lopuhaa_nane1}.
Define now

\begin{equation}
\label{def:S_n}
S_n(x)=\frac{n^{1/3}}{\Phi(\beta_0,x_0)}\left\{ \widehat{W}_n(x_0+n^{-1/3}x)-\widehat{W}_n(x_0) \right\}.
\end{equation}
From the proof of Lemma 9 in~\cite{lopuhaa_nane1}, $S_n(x)$ converges almost surely to the deterministic function $x$, uniformly on every compact set.

Following the approach in~\cite{groeneboom:1985}, Lopuha\"a and Nane~\cite{lopuhaa_nane1} obtained the asymptotic distribution of the unconstrained maximum likelihood estimator $\hoMLE$ by considering the inverse process

\begin{equation}
\label{def:inverse process}
U_n(z)=\argmin_{x\in[T_{(1)},T_{(n)}]}
\left\{ V_n(x)-z\widehat{W}_n(x)\right\},
\end{equation}
for $z>0$, where the argmin function represents the supremum of times at which the minimum is attained. Since the argmin is invariant under addition of and multiplication with positive constants, it follows that

\[
n^{1/3}
\left[
U_n(\theta_0+n^{-1/3}z)-x_0
\right]
=
\argmin_{x\in I_n(x_0)}\left\{ \mathbbm{Z}_n(x)- S_n(x)z\right\},
\]
where $I_n(x_0)=[-n^{1/3}(x_0-T_{(1)}),n^{1/3}(T_{(n)}-x_0)]$.
For $z>0$, the switching relationship $\hoMLE(x)\leq z$ holds if and only if $U_n(z)\geq x$, with probability one. This translates, in the context of this lemma, to

\[
n^{1/3}
\left[
\hoMLE(x_0+n^{-1/3}x)-\theta_0
\right]\leq z
\Leftrightarrow
n^{1/3}
\left[
U_n(\theta_0+n^{-1/3}z)-x_0
\right] \geq x,
\]
for $0<x_0<\tau_H$ and $\theta_0>0$, with probability one. The switching relationship is thus $X_n(x)\leq z\Leftrightarrow n^{1/3}\left[U_n(\theta_0+n^{-1/3}z)-x_0
\right] \geq x$. Hence finding the limiting distribution of $X_n(x)$ resumes to finding the limiting distribution of $n^{1/3}\left[U_n(\theta_0+n^{-1/3}z)-x_0
\right]$.
By applying Theorem 2.7 in~\cite{kimpollard:1990}, it follows that, for every $z>0$,

\[
n^{1/3}
\left[
U_n(\theta_0+n^{-1/3}z)-x_0
\right]
\law
U(z),
\]
as inferred in the proof of Theorem 2 in~\cite{lopuhaa_nane1}, where 

\[
U(z)=\sup \left\{ t\in\mathbbm{R}: X_{a,b}(t) -zt \text{ is minimal} \right\}.
\]
It will result that, for every $x\in[-k,k]$,

\[
\begin{split}
P\left( X_n(x)\leq z \right)
&=
P\left( n^{1/3}
\left[
\hoMLE(x_0+n^{-1/3}x)-\theta_0
\right]\leq z \right)\\
&=
P\left(
n^{1/3}
\left[
U_n(\theta_0+n^{-1/3}z)-x_0
\right]\geq x
\right)\\
&\to
P\left(
U(z)\geq x
\right).
\end{split}
\]
Using the switching relationship on the limiting process, it can be deduced that $U(z)\geq x \Leftrightarrow g_{a,b}(x)\leq z$,
with probability one, and thus $X_n(x)\law g_{a,b}(x)$.

In order to prove the same type of result for $Y_n(x)$, consider first the following process

\begin{equation}
\label{process_Y_tilde}
\widetilde{Y}_n(x)=n^{1/3}\left( \tilde{\lambda}_n(x_0+n^{-1/3}x)-\theta_0 \right),
\end{equation}
where, for $x_0\in(0,\tau_H)$, such that $T_{(m)}<x_0<T_{(m+1)}$,

\[
\tilde{\lambda}_n(x)
=
\begin{cases}
0 & x < T_{(1)},\\
\hll_i & T_{(i)}\leq x< T_{(i+1)},\, \text{for } i=1,\ldots,m-1\\
\hat{\lambda}_m^L & T_{(m)}\leq x < x_0,\\
0 & x_0\leq x < T_{(m+1)},\\
\hlr_i & T_{(i)}\leq x< T_{(i+1)},\, \text{for } i=m+1,\ldots,n-1\\
\infty & x\geq T_{(n)},
\end{cases}
\]
with $\hll_i$ and $\hlr_i$ defined in Lemma~\ref{lemma:characterization_constrained}. For this, we have considered up to $x_0$ an unconstrained estimator which is constructed based on the sample points $T_{(1)},\ldots,T_{(m+1)}$. Moreover, to the right of $x_0$, we have considered an unconstrained estimator based on the points $T_{(m+1)},\ldots,T_{(n)}$. It is not difficult to see that

\begin{equation}
\label{def:Y_n}
Y_n(x)=
\begin{cases}
\min\left( \widetilde{Y}_n(x),0\right)  & x < 0,\\
0 &  x=0,\\
\max \left( \widetilde{Y}_n(x),0\right) & x>0.
\end{cases}
\end{equation}
For $z>0$, define the inverse processes

\begin{align*}
\label{inverse_proc_constr}
\begin{split}
U_n^L(z)&=
\argmin_{x\in[T_{(1)},T_{(m+1)}]}
\left\{ V_n(x)-z\widehat{W}_n(x)\right\},\\
U_n^R(z)&=
\argmin_{x\in[T_{(m+1)},T_{(n)}]}
\left\{ V_n(x)-z\widehat{W}_n(x)\right\}
\end{split}
\end{align*}
Take $x<x_0$. The switching relationship for $\tilde{\lambda}_n$ is given by $\tilde{\lambda}_n(x)\leq z$ if and only if $U_n^L(z)\geq x$, with probability one, which gives that

\[
n^{1/3}
\left[
\tilde{\lambda}_n(x_0+n^{-1/3}x)-\theta_0
\right]\leq z
\Leftrightarrow
n^{1/3}
\left[
U_n^L(\theta_0+n^{-1/3}z)-x_0
\right] \geq x,
\]
with probability one. Moreover,

\[
n^{1/3}
\left[
U_n^L(\theta_0+n^{-1/3}z)-x_0
\right]
=
\argmin_{x\in I_n^L(x_0)}\left\{ \mathbbm{Z}_n(x)- S_n(x)z\right\},
\]
where $I_n^L(x_0)=[-n^{1/3}(x_0-T_{(1)}),n^{1/3}(T_{(m+1)}-x_0)]$. Denote by

\[
Z_n(z,x)=\mathbbm{Z}_n(x)- S_n(x)z.
\]
As for the unconstrained estimator, we aim to apply Theorem 2.7 in~\cite{kimpollard:1990}. As Theorem 2.7 in~\cite{kimpollard:1990} applies to the argmax of processes on the whole real line, we extend the above process in the following manner

\[
Z_n^{-}(z,x)
=
\begin{cases}
Z_n(z,-n^{1/3}(x_0-T_{(1)})) & x < -n^{1/3}(x_0-T_{(1)}),\\
Z_n(z,x) & -n^{1/3}(x_0-T_{(1)})\leq x\leq n^{1/3}(T_{(m+1)}-x_0),\\
Z_n(z,n^{1/3}(T_{(m+1)}-x_0))+1  & x> n^{1/3}(T_{(m+1)}-x_0).
\end{cases}
\]
Then, $Z_n^{-}(z,x)\in B_{loc}(\mathbbm{R})$ and

\[
n^{1/3}
\left[
U_n^L(\theta_0+n^{-1/3}z)-x_0
\right]
=
\argmin_{x\in\mathbbm{R}}\left\{ Z_n^{-}(z,x) \right\}
=
\argmax_{x\in\mathbbm{R}}\left\{ -Z_n^{-}(z,x) \right\}.
\]
Since $\lambda_0(x_0)=\theta_0>0$ and $\lambda_0$ is continuously differentiable in a neighborhood of $x_0$, it follows by a Taylor expansion and by Lemma 2.5 in~\cite{devroye:1981} that $n^{1/3}(T_{(m+1)}-x_0)=\O_p(n^{-1}\log n)$. Therefore, by virtue of Lemma 8 and Lemma 9 in~\cite{lopuhaa_nane1}, the process $x\mapsto -Z_n^{-}(z,x)$ converges weakly to $Z^{-}(x)\in\mathbbm{C}_{max}(\mathbbm{R})$, for any fixed $z$, where

\[
Z_n^{-}(z,x)
=
\begin{cases}
-X_{a,b}(x)+zx & x\leq 0,\\
1  & x> 0,
\end{cases}
\]
for $a$ and $b$ defined in~\eqref{def:a_b}. Hence, the first condition of Theorem 2.7 in~\cite{kimpollard:1990} is verified. The second condition follows directly from Lemma 11 in~\cite{lopuhaa_nane1}, while the third condition is trivially fulfilled. Thus, for any $z$ fixed,

\[
n^{1/3}
\left[
U_n^L(\theta_0+n^{-1/3}z)-x_0
\right]
\law
U^{-}(z),
\]
where $U^{-}(z)=\sup \left\{ t\leq 0: X_{a,b}(t) -zt \text{ is minimal} \right\}$. Concluding, for $x<0$,

\[
\begin{split}
P\left( \widetilde{Y}_n(x)\leq z \right)
&=
P\left( n^{1/3}
\left[
\tilde{\lambda}_n(x_0+n^{-1/3}x)-\theta_0
\right]\leq z \right)\\
&=
P\left(
n^{1/3}
\left[
U_n^L(\theta_0+n^{-1/3}z)-x_0
\right]\geq x
\right)\\
&\to
P\left(
U^{-}(z)\geq x
\right).
\end{split}
\]
The switching relationship for the limiting process gives that $U^{-}(z)\geq x \Leftrightarrow D_L(X_{a,b})(x)\leq z$, with probability one, where $D_L(X_{a,b})(x)$ has been defined as the left-hand slope of the GCM of $X_{a,b}$, at a point $x<0$. Hence, for $x<0$,

\[
\widetilde{Y}_n(x)\law D_L(X_{a,b})(x).
\]
Completely analogous, $\widetilde{Y}_n(x)\law D_R(X_{a,b})(x)$, for $x>0$. By continuous mapping theorem and by \eqref{def:Y_n}, it can be concluded that for fixed $x\in[-k,k]$,

\[
Y_n(x)\law g_{a,b}^0(x),
\]
where $g^0_{a,b}$ has been defined in \eqref{def:g_a,b^0}.

Our next objective is to apply Theorem 6.1 in~\cite{huangwellner:1995}.
The first condition of Theorem 6.1 is trivially fulfilled. The second condition follows by Lemma 11 in~\cite{lopuhaa_nane1}, while the third condition follows by the definition of the inverse processes. Hence, for fixed $x$,

\[
P\left( X_n(x)\leq z,  Y_n(x)\leq z \right)
\to
P\left(g_{a,b}(x)\leq z, g_{a,b}^0(x)\leq z\right),
\]
for $a$ and $b$ defined in \eqref{def:a_b}. The arguments for one dimensional marginal convergence can be extended to the finite dimensional convergence, as in the proof of Theorem 3.6.2 in~\cite{banerjee_thesis}, by making use of Lemma 3.6.10 in~\cite{banerjee_thesis}. Hence, we can conclude that the finite dimensional marginals of the process $(X_n,Y_n)$ converge to the finite dimensional marginals of the process $(g_{a,b},g_{a,b}^0)$. This completes the proof.

%
\end{proof}

By making use of results in~\cite{lopuhaa_nane1}, a completely similar result holds in the nonincreasing setting.

\begin{lemma}
\label{lemma:distr_processes_dec}
Assume (A1) and (A2) and let $x_0\in(0,\tau_H)$. Suppose that $\lambda_0$ is nonincreasing on $[0,\infty)$ and continuously differentiable in a neighborhood of $x_0$, with $\lambda_0(x_0)\neq0$ and $\lambda_0'(x_0)<0$. Moreover, assume that the functions $x\to\Phi(\beta_0,x)$ and $H^{uc}(x)$, defined in~\eqref{def:Phi} and above~\eqref{def:Phi}, are continuously differentiable in a neighborhood of $x_0$.\\
Then, for $a$ and $b$ defined in~\eqref{def:a_b}, $( X_n,Y_n )$ converge jointly to $\left( l_{a,b},l_{a,b}^0 \right)$
in $\mathcal{L}\times \mathcal{L}$, where the processes $l_{a,b}$ and $l_{a,b}^0$ have been defined in \eqref{def:l_a,b} and \eqref{def:l_a,b^0}.
\end{lemma}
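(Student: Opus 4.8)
The plan is to follow the proof of Lemma~\ref{lemma:distr_processes} line by line, under the systematic substitutions: greatest convex minorants become least concave majorants, the process $X_{a,b}$ of~\eqref{def:X_a,b} becomes $\bar X_{a,b}(t)=a\mathbbm{W}(t)-bt^2$, the operator \argmin becomes \argmax, and the characterizations of Lemmas~\ref{lemma:characterization_unconstrained} and~\ref{lemma:characterization_constrained} are replaced by their nonincreasing counterparts, Lemmas~\ref{lemma:characterization_unconstrained_dec} and~\ref{lemma:characterization_constrained_dec}; throughout, the auxiliary results borrowed from~\cite{lopuhaa_nane1} in that proof are replaced by their nonincreasing analogues, which hold by the same arguments. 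Since $X_n$ and $Y_n$ are monotone processes, Corollary~2 in~\cite{huangzhang:1994} and the remark preceding it reduce the statement to convergence of the finite-dimensional marginals of $(X_n,Y_n)$ to those of $(l_{a,b},l_{a,b}^0)$.

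Next I would set up the localized processes. With $\widehat W_n(x)=W_n(\hb,x)-W_n(\hb,T_{(1)})$ and $\bar V_n$ as in Lemma~\ref{lemma:characterization_unconstrained_dec}, define $\mathbbm{Z}_n$ as in~\eqref{def:Zn} but with $V_n$ replaced by $\bar V_n$, and $S_n$ as in~\eqref{def:S_n}. The nonincreasing versions of Lemmas~8 and~9 in~\cite{lopuhaa_nane1} give that $\mathbbm{Z}_n$ converges weakly in $B_{loc}(\mathbbm{R})$ to $\bar X_{a,b}$ (the quadratic drift of $\mathbbm{Z}_n$ equals $\tfrac{1}{2}\lambda_0'(x_0)t^2=-bt^2$ with $b=\tfrac{1}{2}|\lambda_0'(x_0)|$, since $\lambda_0'(x_0)<0$, in agreement with~\eqref{def:a_b}), and that $S_n$ converges almost surely, uniformly on compact sets, to the identity function. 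For the unconstrained estimator I would then use the inverse process $U_n(z)=\argmax_{x\in[T_{(1)},T_{(n)}]}\{V_n(x)-z\widehat W_n(x)\}$, with \argmax selecting the largest maximizing $x$, together with the switching relationship for the least concave majorant, $\hoMLE(x)\ge z\Leftrightarrow U_n(z)\ge x$ with probability one, and the affine invariance of \argmax, to obtain $n^{1/3}[U_n(\theta_0+n^{-1/3}z)-x_0]=\argmax_{x\in I_n}\{\mathbbm{Z}_n(x)-S_n(x)z\}$ on $I_n=[-n^{1/3}(x_0-T_{(1)}),n^{1/3}(T_{(n)}-x_0)]$; Theorem~2.7 in~\cite{kimpollard:1990}, applied as in the proof of Lemma~\ref{lemma:distr_processes}, then gives $n^{1/3}[U_n(\theta_0+n^{-1/3}z)-x_0]\law\sup\{t:\bar X_{a,b}(t)-zt\text{ is maximal}\}$, and the switching relationship on the limit yields $X_n(x)\law l_{a,b}(x)$ for every fixed $x$.

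For the constrained estimator I would, paralleling~\eqref{process_Y_tilde}, introduce $\widetilde Y_n(x)=n^{1/3}(\ho(x_0+n^{-1/3}x)-\theta_0)$, where now $\ho$ is the nonincreasing step function equal to $\hll_i$ on $(T_{(i-1)},T_{(i)}]$ for $i=1,\dots,m$, equal to $0$ on $(T_{(m)},x_0]$, equal to $\hlr_{m+1}$ on $(x_0,T_{(m+1)}]$, and equal to $\hlr_i$ on $(T_{(i-1)},T_{(i)}]$ for $i=m+2,\dots,n$, with $\hll_i,\hlr_i$ as in Lemma~\ref{lemma:characterization_constrained_dec}. Since there $\hlo_i=\max(\hll_i,\theta_0)$ for $i\le m$ and $\hlo_i=\min(\hlr_i,\theta_0)$ for $i\ge m+1$, one has $Y_n(x)=\max(\widetilde Y_n(x),0)$ for $x<0$ and $Y_n(x)=\min(\widetilde Y_n(x),0)$ for $x>0$, with the roles of $\max$ and $\min$ interchanged relative to the nondecreasing case. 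Introducing the inverse processes $U_n^L$ and $U_n^R$ on the natural subintervals to the left and right of $x_0$, defined as in the proof of Lemma~\ref{lemma:distr_processes} with \argmax in place of \argmin, extending them to elements of $B_{loc}(\mathbbm{R})$ as there (using that $n^{1/3}(T_{(m+1)}-x_0)$ and $n^{1/3}(x_0-T_{(m)})$ are $\O_p(n^{-1}\log n)$, by a Taylor expansion and Lemma~2.5 in~\cite{devroye:1981}), and applying Theorem~2.7 in~\cite{kimpollard:1990} together with Lemma~11 in~\cite{lopuhaa_nane1}, I would obtain $\widetilde Y_n(x)\law D_L(\bar X_{a,b})(x)$ for $x<0$ and $\widetilde Y_n(x)\law D_R(\bar X_{a,b})(x)$ for $x>0$, hence $Y_n(x)\law l_{a,b}^0(x)$ by the continuous mapping theorem and~\eqref{def:l_a,b^0}. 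Joint convergence of $(X_n,Y_n)$ at a point then follows from Theorem~6.1 in~\cite{huangwellner:1995}, whose three conditions are checked as in the proof of Lemma~\ref{lemma:distr_processes}, and the extension to arbitrary finite-dimensional marginals proceeds as in the proof of Theorem~3.6.2 in~\cite{banerjee_thesis} via Lemma~3.6.10 there; combined with the first paragraph this proves the lemma.

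I expect the only genuine difficulty to be bookkeeping: keeping every sign and every inequality direction correct under the replacements (least concave majorant for greatest convex minorant, $-bt^2$ for $+bt^2$, \argmax for \argmin, and the interchange of $\max$ and $\min$ on the two sides of $x_0$), and checking that Lemmas~8, 9 and~11 of~\cite{lopuhaa_nane1} carry over to the nonincreasing setting with $\bar X_{a,b}$ in place of $X_{a,b}$. No probabilistic estimate beyond those already available in~\cite{lopuhaa_nane1} is needed.
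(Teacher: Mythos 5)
Your proposal is correct and is exactly the argument the paper intends: the paper gives no written proof of this lemma, stating only that it follows from the results of \cite{lopuhaa_nane1} ``completely analogously'' to Lemma~\ref{lemma:distr_processes}, and your systematic substitutions (LCM for GCM, $\argmax$ for $\argmin$, $\bar X_{a,b}$ for $X_{a,b}$, the interchange of $\min$ and $\max$ on the two sides of $x_0$, and the nonincreasing characterizations of Lemmas~\ref{lemma:characterization_unconstrained_dec} and~\ref{lemma:characterization_constrained_dec}) are the right ones. Your reading of $b$ as $\tfrac12|\lambda_0'(x_0)|$ is also the correct interpretation, since $\bar X_{a,b}$ is defined only for $a,b>0$ while \eqref{def:a_b} taken literally would give $b<0$ here.
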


Subsequently, we state two immediate results, that will be used repeatedly throughout the rest of the paper.

\begin{lemma}
\label{relation:D_n}
Let $x_0\in(0,\tau_H)$ fixed and let $\bar{D}_n$ be the set on which the unconstrained NPMLE $\hoMLE$, defined in Lemma~\ref{lemma:characterization_unconstrained}, differs from constrained NPMLE $\hoMLE^0$, defined in Lemma~\ref{lemma:characterization_constrained}. Then, for any $\varepsilon>0$, there exists $k_\varepsilon>0$ such that

\[
\displaystyle{\liminf _{n\to\infty}}\,P\left(\bar{D}_n\subset[x_0-n^{-1/3}k_\varepsilon,x_0+n^{-1/3}k_\varepsilon]\right)\geq 1-\varepsilon.
\]
\end{lemma}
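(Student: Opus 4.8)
The plan is to trap $\bar{D}_n$ between two data-dependent points that are ``crossing points'' of the \emph{unconstrained} estimator $\hoMLE$ relative to the level $\theta_0$, and then to invoke Lemma~\ref{lemma:distr_processes} to show that these points sit within $\O_p(n^{-1/3})$ of $x_0$. Concretely, I would let $\ell_n^{(1)}$ and $r_n^{(1)}$ be the left and right endpoints of the maximal interval on which $\hoMLE$ is constant and which contains $x_0$, put $\ell_n^{(2)}=\sup\big(\{x\le x_0:\hoMLE(x)<\theta_0\}\cup\{0\}\big)$ and $r_n^{(2)}=\inf\big(\{x\ge x_0:\hoMLE(x)>\theta_0\}\cup\{T_{(n)}\}\big)$, and finally set $\hat{L}_n=\min(\ell_n^{(1)},\ell_n^{(2)})$ and $\hat{R}_n=\max(r_n^{(1)},r_n^{(2)})$, so that $\hat{L}_n\le x_0\le\hat{R}_n$.

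The first, and I expect the hardest, step is the inclusion $\bar{D}_n\subseteq[\hat{L}_n,\hat{R}_n]$. For $x<\hat{L}_n$, say $T_{(i)}\le x<T_{(i+1)}$ (necessarily $1\le i\le m$), the value $\hoMLE(x)=\hl_i$ is by construction strictly below both the common value of $\hoMLE$ on the interval through $x_0$ and $\theta_0$. Using the characterization in Lemma~\ref{lemma:characterization_constrained} together with the elementary fact that the greatest convex minorant of the full cumulative sum diagram $\{P_0,\dots,P_{n-1}\}$ agrees with that of the sub-diagram $\{P_0,\dots,P_m\}$ strictly to the left of the maximal segment through $P_m$ of the full minorant --- a segment whose slope equals $\hoMLE(x_0)$ --- one obtains $\hll_i=\hl_i$, and hence $\hoMLE^0(x)=\min(\hll_i,\theta_0)=\hl_i=\hoMLE(x)$. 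The argument for $x>\hat{R}_n$ is symmetric: there $\hl_i=\hlr_i$ because $P_i$ lies strictly to the right of that segment, and $\hoMLE^0(x)=\max(\hlr_i,\theta_0)=\hl_i$. This is where the bookkeeping of Lemma~\ref{lemma:characterization_constrained} --- identifying exactly when the truncations at $\theta_0$ are inactive --- has to be carried out carefully; everything else is routine.

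It then remains to prove that $n^{1/3}(x_0-\hat{L}_n)$ and $n^{1/3}(\hat{R}_n-x_0)$ are $\O_p(1)$, for which it is enough to control $\ell_n^{(1)},\ell_n^{(2)}$ (and symmetrically $r_n^{(1)},r_n^{(2)}$). If $\ell_n^{(2)}<x_0-n^{-1/3}k$ then $\hoMLE(x_0-n^{-1/3}k)\ge\theta_0$, i.e.\ $X_n(-k)\ge0$, while if $\ell_n^{(1)}<x_0-n^{-1/3}k$ then $\hoMLE$ is constant on $[x_0-n^{-1/3}k,x_0)$, i.e.\ $X_n(-k)=X_n(-1)$ (for $k\ge 2$). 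By Lemma~\ref{lemma:distr_processes} --- more precisely, by the one- and two-dimensional weak convergence $X_n(\cdot)\law g_{a,b}(\cdot)$ established in its proof --- and the portmanteau lemma applied to the closed sets $\{y\ge0\}$ and $\{(y,y'):y=y'\}$, one gets
\[
\limsup_{n\to\infty}P\big(X_n(-k)\ge0\big)\le P\big(g_{a,b}(-k)\ge0\big),
\qquad
\limsup_{n\to\infty}P\big(X_n(-k)=X_n(-1)\big)\le P\big(g_{a,b}(-k)=g_{a,b}(-1)\big).
\]
Both right-hand sides tend to $0$ as $k\to\infty$: since $X_{a,b}(t)=a\mathbbm{W}(t)+bt^2\to\infty$ as $|t|\to\infty$, the slope process $g_{a,b}$ is nondecreasing with $g_{a,b}(t)\to-\infty$ as $t\to-\infty$ and $g_{a,b}(t)\to+\infty$ as $t\to+\infty$, almost surely, so for large $k$ the events $\{g_{a,b}(-k)\ge0\}$ and $\{g_{a,b}(-k)=g_{a,b}(-1)\}$ have small probability. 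The bounds for $r_n^{(1)},r_n^{(2)}$ are identical with the roles of $-k$ and $k$ interchanged. Given $\varepsilon>0$, choosing $k=k_\varepsilon$ so that each of the four limiting probabilities is below $\varepsilon/4$ and then invoking the inclusion from the previous step yields $\liminf_{n\to\infty}P\big(\bar{D}_n\subseteq[x_0-n^{-1/3}k_\varepsilon,x_0+n^{-1/3}k_\varepsilon]\big)\ge1-\varepsilon$, which is the claim.
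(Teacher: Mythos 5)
The paper offers no proof of its own here---it simply defers to Lemma 2.6 of the preprint of~\cite{banerjee:2007}---and your argument is a correct, self-contained reconstruction of exactly that localization strategy: trap $\bar{D}_n$ between the constancy/crossing points of $\hoMLE$ relative to the level $\theta_0$, use the standard localization property of greatest convex minorants to see that the truncations at $\theta_0$ in Lemma~\ref{lemma:characterization_constrained} are inactive outside $[\hat{L}_n,\hat{R}_n]$, and control the trapping points at rate $n^{-1/3}$ via the marginal convergence of $X_n$ established in the proof of Lemma~\ref{lemma:distr_processes} (which does not rely on the present lemma, so there is no circularity). The one step that would need to be written out in full is the GCM localization fact (agreement of the full and one-sided minorants, and hence of their left slopes, strictly to the left, respectively right, of the segment of the full minorant through $P_m$); you correctly flag it, and it does hold by the usual glueing argument, so I see no gap.
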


\begin{proof}
The proof of this fact follows by exactly the same reasoning as in the proof of Lemma 2.6 in~\cite{banerjee_preprint:2006}, preprint for~\cite{banerjee:2007}.
\end{proof}

\begin{lemma}
\label{lemma:X_n_Y_n}
Consider the processes $X_n$ and $Y_n$ defined in~\eqref{processes_neighborhood}. Then, for every $\varepsilon>0$ and $k>0$, there exists an $M>0$ such that

\[
\displaystyle{\limsup_{n\to\infty}} \,P \left( \sup_{x\in[-k,k]}\left| X_n(x) \right|>M \right)\leq \varepsilon.
\]
Similarly,

\[
\displaystyle{\limsup_{n\to\infty}} \,P\left( \sup_{x\in[-k,k]}\left| Y_n(x) \right|>M \right)\leq \varepsilon.
\]
\end{lemma}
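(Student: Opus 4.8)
The plan is to reduce the tightness statement for the processes to the already-established weak convergence results of Lemma \ref{lemma:distr_processes} and Lemma \ref{lemma:distr_processes_dec}, combined with the monotonicity of $X_n$ and $Y_n$. First I would observe that, on the interval $[-k,k]$, both $X_n$ and $Y_n$ are monotone in $x$ (they are rescaled versions of the monotone NPMLEs $\hoMLE$ and $\hoMLE^0$), so the supremum of $|X_n(x)|$ over $[-k,k]$ is controlled by the two boundary values:
\[
\sup_{x\in[-k,k]}|X_n(x)|\le |X_n(-k)|\vee|X_n(k)|,
\]
and likewise for $Y_n$. Hence it suffices to prove tightness of the one-dimensional random variables $X_n(\pm k)$ and $Y_n(\pm k)$ — that is, for every $\varepsilon>0$ find $M$ with $\limsup_n P(|X_n(\pm k)|>M)\le\varepsilon/2$, and similarly for $Y_n$.

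Next I would invoke Lemma \ref{lemma:distr_processes} (or Lemma \ref{lemma:distr_processes_dec} in the nonincreasing case): since $X_n\law g_{a,b}$ and $Y_n\law g_{a,b}^0$ in $\mathcal{L}\times\mathcal{L}$, and since the limiting processes $g_{a,b}$, $g_{a,b}^0$ are almost surely finite at each fixed point $\pm k$, the one-dimensional marginals $X_n(\pm k)$ and $Y_n(\pm k)$ converge in distribution to the finite random variables $g_{a,b}(\pm k)$ and $g_{a,b}^0(\pm k)$. A convergent-in-distribution sequence of real random variables is tight, so for each of the four sequences there is a bound $M$ beyond which the limsup of the tail probability is as small as we wish; taking the maximum of these four bounds and using the monotonicity reduction from the first step gives the claimed inequality, with $\varepsilon/2$ absorbed into $\varepsilon$ via a union bound over the two endpoints. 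The identical argument, using Lemma \ref{lemma:distr_processes_dec}, handles the nonincreasing setting, and the statement for $Y_n$ is obtained in exactly the same way.

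The only delicate point is justifying that weak convergence in $\mathcal{L}$ (the space of locally square-integrable functions with $L_2$-convergence on compacts) actually delivers convergence in distribution of the pointwise values $X_n(\pm k)$; a priori, evaluation at a point is not continuous on $\mathcal{L}$. Here I would lean on the fact — already used in the proof of Lemma \ref{lemma:distr_processes} — that the finite-dimensional marginals of $(X_n,Y_n)$ converge to those of $(g_{a,b},g_{a,b}^0)$ (this is precisely how that lemma is proved, via Corollary 2 in \cite{huangzhang:1994}). In other words, marginal convergence at the two points $\pm k$ is available directly, bypassing the $\mathcal{L}$-topology issue entirely, and the monotonicity bound then propagates it to the supremum over $[-k,k]$. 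I expect this identification of the right mode of convergence to be the main (and essentially only) obstacle; everything else is the standard monotone-to-sup reduction plus tightness of convergent real sequences.
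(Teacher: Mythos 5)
Your proposal is correct and follows essentially the same route as the paper: reduce the supremum over $[-k,k]$ to the endpoint values via monotonicity, then use the one-dimensional distributional convergence $X_n(\pm k)\law g_{a,b}(\pm k)$ and $Y_n(\pm k)\law g^0_{a,b}(\pm k)$ (established via finite-dimensional marginal convergence in the proof of Lemma~\ref{lemma:distr_processes}) to obtain tightness. Your explicit remark that pointwise evaluation is not continuous on $\mathcal{L}$, so one must appeal to the marginal convergence directly, is a point the paper glosses over but handles implicitly in the same way.
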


\begin{proof}
The monotonicity of the processes $X_n$ and $Y_n$ yields that

\[
\begin{split}
\sup_{x\in[-k,k]}\left| X_n(x) \right|=\max\left\{ \left|X_n(-k)\right|,\left|X_n(k)\right| \right\},\\
\sup_{x\in[-k,k]}\left|Y_n(x)\right|=\max\left\{ \left|Y_n(-k)\right|,\left|Y_n(k)\right| \right\}.
\end{split}
\]
Assume $|X_n(k)|$ to be the maximum in the above display. Since for fixed $k$, $X_n(k)\law g_{a,b}(k)$, with $a$ and $b$ defined in \eqref{def:a_b}, it will result that the processes $X_n$ and $Y_n$ in~\eqref{processes_neighborhood} are, with high probability, uniformly bounded.
\end{proof}

The limiting distribution of the likelihood ratio statistic of a nondecreasing baseline hazard function $\lambda_0$ is supplied then by the subsequent theorem.

\begin{theorem}
\label{theorem_inc}
Suppose (A1) and (A2) hold and let $x_0\in(0,\tau_H)$. Assume that $\lambda_0$ is nondecreasing on $[0,\infty)$ and continuously differentiable in a neighborhood of $x_0$, with $\lambda_0(x_0)\neq0$ and $\lambda_0'(x_0)>0$. Moreover, assume that $H^{uc}(x)$ and $x\to\Phi(\beta_0,x)$, defined in~\eqref{def:Phi} and above~\eqref{def:Phi}, are continuously differentiable in a neighborhood of $x_0$. Let $2\log \xi_n(\theta_0)$ be the likelihood ratio statistic for testing $H_0:\lambda_0(x_0)=\theta_0$, as defined in~\eqref{likelihood_ratio}. Then,

\[
2\log \xi_n(\theta_0)\law \mathbbm{D},
\]
where $\mathbbm{D}= \int \left[ (g_{1,1}(u))^2-(g_{1,1}^0(u))^2 \right]\, \mathrm{d}u$, with $g_{1,1}$ and $g_{1,1}^0$ defined in \eqref{def:g_a,b} and \eqref{def:g_a,b^0}.
\end{theorem}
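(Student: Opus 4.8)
The plan is to reduce the likelihood ratio statistic \eqref{likelihood_ratio} to a quadratic functional of the localized processes $X_n,Y_n$ of \eqref{processes_neighborhood}, and then to pass to the limit via Lemma~\ref{lemma:distr_processes}. Write $\hb$ for the maximum partial likelihood estimator, abbreviate $w_i=[T_{(i+1)}-T_{(i)}]\sum_{l=i+1}^{n}\text{e}^{\hb'Z_{(l)}}$, and set $\xi_i=\hoMLE(T_{(i)})-\theta_0$ and $\eta_i=\hlo_i-\theta_0$. Inserting the reduced loglikelihoods \eqref{likelihood_inc_reduced} and \eqref{likelihood_inc_constr_reduced} into \eqref{likelihood_ratio}, one sees that, as functions of $(\lambda_0(T_{(1)}),\dots,\lambda_0(T_{(n-1)}))$, the constrained reduced objective $\phi^0$ of \eqref{function_phi_constrained} and the unconstrained one $\phi$ of \eqref{likelihood1} agree except at the index $m$, where $w_m$ is replaced by $\tilde w_m:=[x_0-T_{(m)}]\sum_{l>m}\text{e}^{\hb'Z_{(l)}}$ and an additive constant $-\theta_0(w_m-\tilde w_m)$ is subtracted; hence $2\log\xi_n(\theta_0)=2\bigl(\phi(\hoMLE)-\phi(\hoMLE^0)\bigr)-2(w_m-\tilde w_m)\eta_m$, with $(w_m-\tilde w_m)\eta_m=o_p(1)$ by Lemma~\ref{lemma:X_n_Y_n} (together with the standard bound on the spacing of the order statistics near $x_0$). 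By Lemma~\ref{relation:D_n} the summands of $\phi(\hoMLE)-\phi(\hoMLE^0)$ vanish off the block $\bar D_n$, which lies in a window of width $n^{-1/3}k_\varepsilon$ about $x_0$ (hence carries $\O_p(n^{2/3})$ observations) and on which $\xi_i,\eta_i=\O_p(n^{-1/3})$ uniformly, again by Lemma~\ref{lemma:X_n_Y_n}. Taylor expanding the logarithm on $\bar D_n$ then gives
\[
\phi(\hoMLE)-\phi(\hoMLE^0)=\sum_{i\in\bar D_n}\Bigl(\tfrac{\Delta_{(i)}}{\theta_0}-w_i\Bigr)(\xi_i-\eta_i)-\frac{1}{2\theta_0^2}\sum_{i\in\bar D_n}\Delta_{(i)}(\xi_i^2-\eta_i^2)+o_p(1),
\]
the cubic remainder being $\O_p(\#\bar D_n\cdot n^{-1})=o_p(1)$.

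The key point is that the first sum in this display is itself asymptotically quadratic. Using $\tfrac{\Delta_{(i)}}{\theta_0}-w_i=\bigl(\tfrac{\Delta_{(i)}}{\hoMLE(T_{(i)})}-w_i\bigr)+\tfrac{\Delta_{(i)}\xi_i}{\theta_0\,\hoMLE(T_{(i)})}$ and summing against $\xi_i=\hoMLE(T_{(i)})-\theta_0$ over \emph{all} $i$, the first contribution equals $\sum_i\bigl(\tfrac{\Delta_{(i)}}{\hoMLE(T_{(i)})}-w_i\bigr)\hoMLE(T_{(i)})-\theta_0\sum_i\bigl(\tfrac{\Delta_{(i)}}{\hoMLE(T_{(i)})}-w_i\bigr)$, which vanishes identically by the Fenchel relations \eqref{cond:fenchel2} and \eqref{cond:extra}; likewise, summing against $\eta_i=\hlo_i-\theta_0$ and using that, by Lemma~\ref{lemma:characterization_constrained}, $\hoMLE^0$ satisfies \eqref{cond:fenchel2_constr}, the analogous term is $o_p(1)$, the only subtlety being that at $i=m$ the weight $w_m$ in $\phi$ differs from the constrained weight $\tilde w_m$, which reproduces exactly the $(w_m-\tilde w_m)\eta_m$ term already isolated above. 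Subtracting the two and noting that the indices outside $\bar D_n$ cancel (there $\xi_i=\eta_i$ and $\hoMLE(T_{(i)})=\hlo_i$), then replacing $\hoMLE(T_{(i)})$ and $\hlo_i$ by $\theta_0$ in the surviving denominators on $\bar D_n$ (an $o_p(1)$ error), one obtains $\sum_{i\in\bar D_n}\bigl(\tfrac{\Delta_{(i)}}{\theta_0}-w_i\bigr)(\xi_i-\eta_i)=\theta_0^{-2}\sum_{i\in\bar D_n}\Delta_{(i)}(\xi_i^2-\eta_i^2)+o_p(1)$, and therefore
\[
2\log\xi_n(\theta_0)=\frac{1}{\theta_0^2}\sum_{i\in\bar D_n}\Delta_{(i)}\bigl[(\hoMLE(T_{(i)})-\theta_0)^2-(\hlo_i-\theta_0)^2\bigr]+o_p(1).
\]

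It remains to pass to the limit. With $V_n$ as in \eqref{def:V_n}, the right-hand sum equals $n^{1/3}\int[X_n^2-Y_n^2]\bigl(n^{1/3}(u-x_0)\bigr)\,\mathrm dV_n(u)$. Since $X_n$ and $Y_n$ are monotone, $X_n^2$ and $Y_n^2$ have bounded variation on compacts, and the increments of the empirical sub-distribution function of the uncensored observations over a window of width $n^{-1/3}$ are, after centering, uniformly $o_p(n^{-1/3})$; hence one may replace $\mathrm dV_n$ by $\mathrm dH^{uc}$ and substitute $u=x_0+n^{-1/3}v$ to obtain $h^{uc}(x_0)\int(X_n(v)^2-Y_n(v)^2)\,\mathrm dv+o_p(1)$, where $h^{uc}=(H^{uc})'$. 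In the Cox model $h^{uc}(x_0)=\lambda_0(x_0)\Phi(\beta_0,x_0)=\theta_0\Phi(\beta_0,x_0)$ with $\Phi$ as in \eqref{def:Phi}, so with $a$ as in \eqref{def:a_b} this reads $2\log\xi_n(\theta_0)=a^{-2}\int(X_n^2-Y_n^2)(v)\,\mathrm dv+o_p(1)$. By Lemma~\ref{lemma:distr_processes}, $(X_n,Y_n)\law(g_{a,b},g_{a,b}^0)$ in $\mathcal L\times\mathcal L$; since $X_n^2-Y_n^2$ (by Lemma~\ref{relation:D_n}) and $g_{a,b}^2-(g_{a,b}^0)^2$ (by the almost-surely compact support noted after \eqref{def:g_a,b^0}) are supported in a fixed compact set with probability arbitrarily close to one, and $f\mapsto\int f^2$ is continuous for $L_2$ convergence on a compact interval, the continuous mapping theorem gives $2\log\xi_n(\theta_0)\law a^{-2}\int\bigl[g_{a,b}(v)^2-g_{a,b}^0(v)^2\bigr]\,\mathrm dv$. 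Finally, the Brownian scaling $X_{a,b}(ct)\eqd\kappa\,X_{1,1}(t)$ from \eqref{def:X_a,b}, with $c=(a/b)^{2/3}$ and $\kappa=a^{4/3}b^{-1/3}$, is inherited by the (constrained) greatest convex minorant and its left slopes, so that $\int g_{a,b}(v)^2\,\mathrm dv\eqd a^2\int g_{1,1}(v)^2\,\mathrm dv$ jointly with the same identity for $g_{a,b}^0$; hence the limit equals $\int\bigl[g_{1,1}(v)^2-g_{1,1}^0(v)^2\bigr]\,\mathrm dv=\mathbbm{D}$.

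The main obstacle is the second step: arranging the Karush--Kuhn--Tucker/Fenchel identities so that the linear term collapses onto precisely twice the quadratic term, while carefully tracking the exceptional index $m$ and the weight discrepancy $w_m-\tilde w_m$, and verifying that each such discrepancy, the cubic remainder, and each replacement of $\hoMLE(T_{(i)})$ or $\hlo_i$ by $\theta_0$ is genuinely $o_p(1)$. This is exactly where the localization of Lemma~\ref{relation:D_n} (a window of width $n^{-1/3}$ carrying $\O_p(n^{2/3})$ observations) and the uniform tightness of $X_n,Y_n$ from Lemma~\ref{lemma:X_n_Y_n} are needed; given these, the empirical-process estimate in the last step is routine. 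The nonincreasing case follows the same route using Lemma~\ref{lemma:characterization_unconstrained}, Lemma~\ref{lemma:characterization_constrained} and Lemma~\ref{lemma:distr_processes_dec} in place of their nondecreasing counterparts.
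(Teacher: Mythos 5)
Your proposal is correct and follows the same overall architecture as the paper's proof (Taylor expansion of the log terms around $\theta_0$, collapse of the linear term via the Karush--Kuhn--Tucker/Fenchel identities, localization to $\bar D_n$ via Lemma~\ref{relation:D_n} and Lemma~\ref{lemma:X_n_Y_n}, passage to the limit via Lemma~\ref{lemma:distr_processes}, and Brownian scaling), but the middle step is organized differently. The paper substitutes the closed-form block values \eqref{def:v_nj} and \eqref{def:v_nj0} into the linear term, producing \emph{two} quadratic forms: $A_n$, weighted by the interval lengths times $\sum_{l>i}\mathrm{e}^{\hb'Z_{(l)}}$ (i.e.\ by $\Phi_n(\hb,u)\,\mathrm du$, with the index $m$ handled through the split interval $[T_{(m)},x_0)\cup[x_0,T_{(m+1)})$), and $B_n$, weighted by $\mathrm dV_n$; these are combined only at the very end through the identity $\lambda_0=V'/\Phi$, giving $\tfrac{2}{\theta_0}\Phi-\tfrac{1}{\theta_0^2}V'=\tfrac{1}{a^2}$. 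You instead apply the Fenchel equalities \eqref{cond:fenchel2}, \eqref{cond:extra} and \eqref{cond:fenchel2_constr} directly to show that the linear term equals the $\mathrm dV_n$-weighted quadratic form itself up to $o_p(1)$, so that $2\log\xi_n(\theta_0)=\theta_0^{-2}\sum_{i\in\bar D_n}\Delta_{(i)}\bigl[(\hoMLE(T_{(i)})-\theta_0)^2-(\hlo_i-\theta_0)^2\bigr]+o_p(1)$ (which is exactly the paper's $B_n$), and you then need only the single replacement $\mathrm dV_n\to\mathrm dH^{uc}$ together with $h^{uc}(x_0)=\lambda_0(x_0)\Phi(\beta_0,x_0)$. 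Your route is slightly cleaner in that the exceptional index $m$ enters only through the scalar discrepancies $(w_m-\tilde w_m)\eta_m=\O_p(n^{-1/3}\log n)=o_p(1)$ rather than through modified interval lengths inside the quadratic form, and your bookkeeping of these discrepancies, of the cubic remainder, and of the denominator replacements $\hoMLE(T_{(i)}),\hlo_i\to\theta_0$ (each $\O_p(n^{-1})$ per term over $\O_p(n^{2/3})$ terms) is sound; the paper's route, in exchange, makes the identity $A_n\approx 2B_n$ visible as a consequence of the model structure rather than of the KKT algebra. The final continuous-mapping step, which you handle with an explicit $\varepsilon$-compactification, is what the paper delegates to Lemma 4.2 of Prakasa Rao; the two are equivalent.
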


\begin{proof}
By~\eqref{likelihood_inc_reduced} and~\eqref{likelihood_inc_constr_reduced}, the likelihood ratio statistic $2\log \xi_n(\theta_0)=2 L_{\hb}(\hoMLE)-2 L_{\hb}(\hoMLE^0)$ can be expressed as


\[
\begin{split}
2\log \xi_n(\theta_0)
=&2
\sum_{i=1}^{n-1}
\Delta_{(i)}\log\hoMLE(T_{(i)})
-
2\sum_{i=1}^{n-1}
\Delta_{(i)}\log\hoMLE^0(T_{(i)})\\
&-
2\sum_{\substack{i=1\\  i\neq m}}^{n-1}
\left[ T_{(i+1)}-T_{(i)}\right]
\left[
\hoMLE(T_{(i)})
-
\hoMLE^0(T_{(i)})
\right]
\sum_{l=i+1}^n
\text{e}^{\hb' Z_{(l)}}\\
&-2
\left[ T_{(m+1)}-x_0\right]
\left[
\hoMLE(T_{(m)})
-
\theta_0
\right]
\sum_{l=m+1}^n
\text{e}^{\hb' Z_{(l)}}\\
&-2
\left[ x_0-T_{(m)}\right]
\left[
\hoMLE(T_{(m)})
-
\hoMLE^0(T_{(m)})
\right]\sum_{l=m+1}^n
\text{e}^{\hb' Z_{(l)}}.
\end{split}
\]
Let

\begin{equation}
\label{def:T_n}
S_n=2
\sum_{i=1}^{n-1}
\Delta_{(i)}\log\hoMLE(T_{(i)})
-
2\sum_{i=1}^{n-1}
\Delta_{(i)}\log\hoMLE^0(T_{(i)}),
\end{equation}
and denote by $D_n$, the set of indices $i$ on which $\hoMLE(T_{(i)})$ differs from $\hoMLE^0(T_{(i)})$. Hence, expanding both terms of $S_n$ around $\lambda_0(x_0)=\theta_0$, we get

\[
\begin{split}
S_n=&
2
\sum_{i\in D_n}
\Delta_{(i)}\frac{\hoMLE(T_{(i)})-\theta_0}{\theta_0}
-
2\sum_{i\in D_n}
\Delta_{(i)}\frac{\hoMLE^0(T_{(i)})-\theta_0}{\theta_0}\\
&-\sum_{i\in D_n}
\Delta_{(i)}\frac{\left[\hoMLE(T_{(i)})-\theta_0\right]^2}{\theta_0^2}
+
\sum_{i\in D_n}
\Delta_{(i)}\frac{\left[\hoMLE^0(T_{(i)})-\theta_0\right]^2}{\theta_0^2}+R_n,
\end{split}
\]
with

\[
\begin{split}
R_n&=\frac{1}{3}\sum_{i\in D_n}\Delta_{(i)}\frac{\left[\hoMLE(T_{(i)})-\theta_0\right]^3}{\left[\hat{\lambda}_n^*(T_{(i)})\right]^3}
-
\frac{1}{3}\sum_{i\in D_n}\Delta_{(i)}\frac{\left[\hoMLE^0(T_{(i)})-\theta_0\right]^3}{\left[\hat{\lambda}_n^{0*}(T_{(i)})\right]^3}\\
&=R_{n,1}-R_{n,2},
\end{split}
\]
where $\hat{\lambda}_n^*(T_{(i)})$ is a point between $\hoMLE(T_{(i)})$ and $\theta_0$ and $\hat{\lambda}_n^{0*}(T_{(i)})$ is a point between $\hoMLE^0(T_{(i)})$ and $\theta_0$. We want to show that $R_{n,1}$ and $R_{n,2}$, hence $R_n$ converge to zero, in probability. As for the $R_{n,1}$ term, it can be inferred that

\[
|R_{n,1}|\leq \frac{1}{3}\int \delta\{u\in \bar{D}_n\} \frac{\left|n^{1/3}\left( \hoMLE(u)-\theta_0\right)\right|^3}{\left|\hat{\lambda}_n^*(u)\right|^3}\, \mathrm{d}P_n(u,\delta,z),
\]
where $\bar{D}_n$ is the time interval on which $\hoMLE$ differs from $\hoMLE^0$. Choose now $\varepsilon>0$ and $\gamma>0$, and for $x_0\in(0,\tau_H)$ fixed and $k_{\varepsilon}>0$, denote by $I_n=[x_0-n^{-1/3}k_\varepsilon,x_0+n^{-1/3}k_\varepsilon]$. We can write $R_{n,1}=R_{n,1}\{\bar{D}_n\subset I_n\}+R_{n,1}\{\bar{D}_n\nsubset I_n\}$. Since, by Lemma~\ref{relation:D_n},

\[\mathbbm{P}(|R_{n,1}\{\bar{D}_n\nsubset I_n\}|>\gamma)\leq \mathbbm{P}(\bar{D}_n\nsubset I_n)<\varepsilon,
\]
we will further focus on bounding $|R_{n,1}\{\bar{D}_n\subset I_n\}|$.
By Lemma~\ref{relation:D_n} and by Lemma~\ref{lemma:X_n_Y_n}, there exists $k_\varepsilon>0$ such that $\sup_{x\in[-k_\varepsilon,k_\varepsilon]}\left| \hoMLE(x_0+n^{-1/3}x)-\theta_0\right|$ is $\mathcal{O}_p(n^{-1/3})$. Furthermore, since

\[
\sup_{x\in[-k_\varepsilon,k_\varepsilon]}\left| \hoMLE^*(x_0+n^{-1/3}x)-\theta_0\right|\leq \sup_{x\in[-k_\varepsilon,k_\varepsilon]}\left| \hoMLE(x_0+n^{-1/3}x)-\theta_0\right|,
\]
it will result that, for $u\in\bar{D}_n$, $\left|n^{1/3}\left( \hoMLE(u)-\theta_0\right)\right|^3$ is uniformly bounded and $\left|\hat{\lambda}_n^*(u)\right|^3$ is uniformly bounded away from zero. It will result that there exists $M>0$ such that

\[
\begin{split}
|R_{n,1}|\leq & M \int \delta \{x_0-k_{\varepsilon}n^{-1/3}\leq u\leq x_0+k_{\varepsilon}n^{-1/3}\}\mathrm{d}\left(P_n-P\right)(u,\delta,z)\\
&+
M \int \delta \{x_0-k_{\varepsilon}n^{-1/3}\leq u\leq x_0+k_{\varepsilon}n^{-1/3}\}\mathrm{d}P(u,\delta,z)
+
o_p(1).
\end{split}
\]
Chebyshev's inequality provides that the first term on the right-hand side is $\O_p(n^{-2/3})$. As the function $H^{uc}$ defined above~\eqref{def:Phi} is assumed to be continuously differentiable in a neighborhood of $x_0$, the second term on the right-hand side is $\O_p(n^{-1/3})$. We can conclude that $R_{n,1}=o_p(1)$. Completely similar, by using Lemma~\ref{relation:D_n} and Lemma~\ref{lemma:X_n_Y_n}, it can be shown that $R_{n,2}=o_p(1)$. Thus $2\log \xi_n(\theta_0)=A_n-B_n+o_p(1)$, where

\begin{equation}
\label{def:A_n}
\begin{split}
A_n
=&
\frac{2}{\theta_0}
\sum_{i\in D_n}
\Delta_{(i)}
\left[\hoMLE(T_{(i)})
-
\hoMLE^0(T_{(i)})
\right]\\
&-2\sum_{i\in D_n\setminus{\{m\}}}
\left[ T_{(i+1)}-T_{(i)}\right]
\left[
\hoMLE(T_{(i)})
-
\hoMLE^0(T_{(i)})
\right]
\sum_{l=i+1}^n
\text{e}^{\hb' Z_{(l)}}\\
&-2\left[ T_{(m+1)}-x_0\right]
\left[
\hoMLE(T_{(m)})
-
\theta_0
\right]
\sum_{l=m+1}^n
\text{e}^{\hb' Z_{(l)}}\\
&-2\left[ x_0-T_{(m)}\right]
\left[
\hoMLE(T_{(m)})
-
\hoMLE^0(T_{(m)})
\right]
\sum_{l=m+1}^n
\text{e}^{\hb' Z_{(l)}},
\end{split}
\end{equation}
and

\begin{equation}
\label{def:B_n}
B_n=\frac{1}{\theta_0^2}\sum_{i\in D_n}
\Delta_{(i)} \left\{
\left[\hoMLE(T_{(i)})
-
\theta_0
\right]^2-\left[\hoMLE^0(T_{(i)})
-
\theta_0
\right]^2
\right\}.
\end{equation}
Hence, $A_n$ can be written as $A_n=A_{n1}-A_{n2}$, where

\[
A_{n1}=\frac{2}{\theta_0}
\sum_{i\in D_n}
\left[\hoMLE(T_{(i)})
-
\theta_0
\right]
\left\{
\Delta_{(i)}-\theta_0\left[ T_{(i+1)}-T_{(i)}\right]\sum_{l=i+1}^n
\text{e}^{\hb' Z_{(l)}}
\right\},
\]
and

\[
\begin{split}
A_{n2}=&\frac{2}{\theta_0}\sum_{i\in D_n\setminus{\{m\}}}
\left[\hoMLE^0(T_{(i)})
-
\theta_0
\right]
\left\{
\Delta_{(i)}-\theta_0\left[ T_{(i+1)}-T_{(i)}\right]\sum_{l=i+1}^n
\text{e}^{\hb' Z_{(l)}}
\right\}\\
&+
\frac{2}{\theta_0}
\left[\hoMLE^0(T_{(m)})
-
\theta_0
\right]
\left\{
\Delta_{(m)}-\theta_0\left[ x_0-T_{(m)}\right]\sum_{l=m+1}^n
\text{e}^{\hb' Z_{(l)}}
\right\}.
\end{split}
\]
For the term $A_{n1}$, partition the set of indices $D_n$ into $s$ consecutive blocks of indices $B_1,\ldots,B_s$, such that $\hoMLE$ is constant on each block. Denote by $\hat{v}_{nj}$ the unconstrained estimator $\hoMLE(T_{(i)})$, for each $i\in B_j$, with $j=1,\ldots,s$. By~\eqref{def:v_nj}, it follows that

\[
\begin{split}
A_{n1}&=\frac{2}{\theta_0}\sum_{j=1}^s\sum_{i\in B_j}\left( \hat{v}_{nj}-\theta_0 \right)\left\{ \Delta_{(i)}-\theta_0 \left[ T_{(i+1)}-T_{(i)}\right]\sum_{l=i+1}^n
\text{e}^{\hb' Z_{(l)}}\right\}\\
&=\frac{2}{\theta_0}\sum_{j=1}^s\left( \hat{v}_{nj}-\theta_0 \right)\left\{ \sum_{i\in B_j}\Delta_{(i)}-\theta_0 \sum_{i\in B_j}\left[ T_{(i+1)}-T_{(i)}\right]\sum_{l=i+1}^n
\text{e}^{\hb' Z_{(l)}}\right\}\\
&=\frac{2}{\theta_0}\sum_{j=1}^s\left( \hat{v}_{nj}-\theta_0 \right)^2 \sum_{i\in B_j}\left[ T_{(i+1)}-T_{(i)}\right]\sum_{l=i+1}^n
\text{e}^{\hb' Z_{(l)}}\\
&=\frac{2}{\theta_0}n \sum_{i\in D_n}\left[ \hoMLE(T_{(i)})-\theta_0 \right]^2\frac{1}{n} \left[ T_{(i+1)}-T_{(i)}\right]\sum_{l=i+1}^n
\text{e}^{\hb' Z_{(l)}}.
\end{split}
\]
Define

\begin{equation}
\label{def:Phi_n}
\Phi_n(\beta,x)=\int \{u\geq x\}\, \text{e}^{\beta' z}\, \mathrm{d}P_n(u,\delta,z),
\end{equation}
and note that

\[
\int_{[T_{(i)},T_{(i+1)})}\Phi_n(\hb,u)\,\mathrm{d}u
=
\frac{1}{n}\left[T_{(i+1)}-T_{(i)} \right]
\sum_{l=i+1}^n
\text{e}^{\hb' Z_{(l)}},
\]
for each $i=1,\ldots,n-1$.
The term $A_{n1}$ can then be written as

\[
A_{n1}
=
\frac{2}{\theta_0}n
\int\left\{u\in\bar{D}_n\right\} \left[ \hoMLE(u)-\theta_0 \right]^2 \Phi_n(\hb,u)\, \mathrm{d}u,
\]
where $\bar{D}_n$ is the interval on which $\hoMLE$ and $\hoMLE^0$ differ.
Similarly, for the term $A_{n2}$, partition $D_n$ into $q$ consecutive blocks of indices $B_1^0,\ldots,B_q^0$, such that the constrained estimator $\hoMLE^0$ is constant on each block. There is one block, say $B^0_r$, on which the constrained estimator is $\theta_0$, and one block, say $B_p^0$ that contains $m$. On all other blocks $B^0_j$, denote by $\hat{v}_{nj}^0$ the constrained estimator $\hoMLE^0(T_{(i)})$, for each $i\in B^0_j$. It will result that,

\[
\begin{split}
A_{n2}=&\frac{2}{\theta_0}\sum_{\substack{j=1\\j\neq r,p}}^q\sum_{i\in B_j^0}\left( \hat{v}_{nj}^0-\theta_0 \right)\left\{ \Delta_{(i)}-\theta_0 \left[ T_{(i+1)}-T_{(i)}\right]\sum_{l=i+1}^n
\text{e}^{\hb' Z_{(l)}}\right\}\\
&+
\frac{2}{\theta_0}\sum_{i\in B_p^0\setminus{\{m\}}}\left( \hat{v}_{np}^0-\theta_0 \right)\left\{ \Delta_{(i)}-\theta_0 \left[ T_{(i+1)}-T_{(i)}\right]\sum_{l=i+1}^n
\text{e}^{\hb' Z_{(l)}}\right\}\\
&+
\frac{2}{\theta_0}\left( \hat{v}_{np}^0-\theta_0 \right)\left\{ \Delta_{(m)}-\theta_0 \left[ x_0-T_{(m)}\right]\sum_{l=m+1}^n
\text{e}^{\hb' Z_{(l)}}\right\}\\
=&
\frac{2}{\theta_0}\sum_{\substack{j=1\\j\neq r,p}}^q\left( \hat{v}_{nj}^0-\theta_0 \right)\left\{ \sum_{i\in B_j^0}\Delta_{(i)}-\theta_0 \sum_{i\in B_j^0}\left[ T_{(i+1)}-T_{(i)}\right]\sum_{l=i+1}^n
\text{e}^{\hb' Z_{(l)}}\right\}\\
&+
\frac{2}{\theta_0}\left( \hat{v}_{np}^0-\theta_0 \right)
\Bigg\{
\sum_{i\in B_p^0}\Delta_{(i)}-\theta_0
\bigg[
\sum_{i\in B_p^0\setminus{\{m\}}}\left[ T_{(i+1)}-T_{(i)}\right]\sum_{l=i+1}^n
\text{e}^{\hb' Z_{(l)}}\\
&\qquad \qquad\qquad\qquad\qquad
\qquad\qquad\quad+
\left[ x_0-T_{(m)}\right]\sum_{l=m+1}^n
\text{e}^{\hb' Z_{(l)}}
\bigg]
\Bigg\}.
\end{split}
\]
By~\eqref{def:v_nj0} and~\eqref{def:v_np0},

\[
\begin{split}
A_{n2}=&\frac{2}{\theta_0}\sum_{\substack{j=1\\j\neq r,p}}^q\left( \hat{v}_{nj}^0-\theta_0 \right)^2 \sum_{i\in B_j^0}\left[ T_{(i+1)}-T_{(i)}\right]\sum_{l=i+1}^n
\text{e}^{\hb' Z_{(l)}}\\
&+
\frac{2}{\theta_0}\left( \hat{v}_{np}^0-\theta_0 \right)^2
\Bigg\{
\sum_{i\in B_p^0\setminus{\{m\}}}\left[ T_{(i+1)}-T_{(i)}\right]\sum_{l=i+1}^n
\text{e}^{\hb' Z_{(l)}}\\
&\qquad \qquad\qquad\qquad\qquad+
\left[ x_0-T_{(m)}\right]\sum_{l=m+1}^n
\text{e}^{\hb' Z_{(l)}}
\Bigg\}\\
=&\frac{2}{\theta_0}n \sum_{i\in D_n\setminus{\{m\}}}\left[ \hoMLE^0(T_{(i)})-\theta_0 \right]^2\frac{1}{n} \left[ T_{(i+1)}-T_{(i)}\right]\sum_{l=i+1}^n
\text{e}^{\hb' Z_{(l)}}\\
&+
\frac{2}{\theta_0}n\left[ \hoMLE^0(T_{(m)})-\theta_0 \right]^2\frac{1}{n} \left[ x_0-T_{(m)}\right]\sum_{l=m+1}^n
\text{e}^{\hb' Z_{(l)}}.
\end{split}
\]
As $\hoMLE^0(x)=\hoMLE^0(T_{(m)})$ on the interval $[T_{(m)},x_0)$ and $\hoMLE^0(x)=\theta_0$ on the interval $[x_0,T_{(m+1)})$, this gives that

\[
\begin{split}
\int_{T_{(m)}}^{T_{(m+1)}}&\left[ \hoMLE^0(u)-\theta_0 \right]^2\Phi_n(\hb,u)\,\mathrm{d}u\\
=&
\int_{T_{(m)}}^{x_0}\left[ \hoMLE^0(u)-\theta_0 \right]^2\Phi_n(\hb,u)\,\mathrm{d}u
+
\int_{x_0}^{T_{(m+1)}}\left[ \hoMLE^0(u)-\theta_0 \right]^2\Phi_n(\hb,u)\,\mathrm{d}u\\
=&
\frac{1}{n}
\left[ \hoMLE^0(T_{(m)})-\theta_0 \right]^2
\left[ x_0-T_{(m)}\right]\sum_{l=m+1}^n
\text{e}^{\hb' Z_{(l)}}.
\end{split}
\]
This leads to

\[
A_{n2}
=
\frac{2}{\theta_0}n
\int \left\{u\in\bar{D}_n\right\} \left[ \hoMLE^0(u)-\theta_0 \right]^2 \Phi_n(\hb,u)\, \mathrm{d}u,
\]
and, thus $A_n$ in~\eqref{def:A_n} can be written as

\[
A_n=\frac{2}{\theta_0}n \int \left\{u\in\bar{D}_n\right\}
\left\{\left[ \hoMLE(u)-\theta_0 \right]^2- \left[ \hoMLE^0(u)-\theta_0 \right]^2 \right\}\Phi_n(\hb,u)\, \mathrm{d}u.
\]
In a similar manner, $B_n$ in~\eqref{def:B_n} can be expressed as

\[
B_n=\frac{1}{\theta_0^2}n\int \left\{u\in\bar{D}_n\right\}
\left\{\left[ \hoMLE(u)-\theta_0 \right]^2- \left[ \hoMLE^0(u)-\theta_0 \right]^2 \right\}\, \mathrm{d}V_n(u),
\]
by \eqref{def:V_n} and by noting that for every $i=1,\ldots,n-1$,

\[
\int_{[T_{(i)},T_{(i+1)})}\,\mathrm{d}V_n(u)
=
V_n(T_{(i+1)})-V_n(T_{(i)})
=
\frac{1}{n}\Delta_{(i)}.
\]
Concluding,

\[
\begin{split}
2\log \xi_n(\theta_0)=&\frac{2}{\theta_0}n \int \left\{u\in\bar{D}_n\right\}
\left\{\left[ \hoMLE(u)-\theta_0 \right]^2- \left[ \hoMLE^0(u)-\theta_0 \right]^2 \right\}\Phi_n(\hb,u)\, \mathrm{d}u\\
&-\frac{1}{\theta_0^2}n\int
\left\{u\in\bar{D}_n\right\}
 \left\{\left[ \hoMLE(u)-\theta_0 \right]^2- \left[ \hoMLE^0(u)-\theta_0 \right]^2 \right\}\, \mathrm{d}V_n(u)+o_p(1).
\end{split}
\]
Let $V(x)=\int \delta \{u<x\}\,\mathrm{d}P(u,\delta,z)$, and see that, in fact, $V(x)=H^{uc}(x)$, where $H^{uc}$ has been defined above~\eqref{def:Phi}. Thus,

\[
\begin{split}
2\log \xi_n(\theta_0)=&\frac{2}{\theta_0}n \int
\left\{u\in\bar{D}_n\right\}
\left\{\left[ \hoMLE(u)-\theta_0 \right]^2- \left[ \hoMLE^0(u)-\theta_0 \right]^2 \right\}\Phi(\beta_0,u)\, \mathrm{d} u\\
&-\frac{1}{\theta_0^2}n\int
\left\{u\in\bar{D}_n\right\}
 \left\{\left[ \hoMLE(u)-\theta_0 \right]^2- \left[ \hoMLE^0(u)-\theta_0 \right]^2 \right\}\, \mathrm{d}V(u)+\bar{R}_n+o_p(1),
\end{split}
\]
where $\bar{R}_n=\bar{R}_{n1}-\bar{R}_{n2}$, with

\[
\begin{split}
\bar{R}_{n1}=\frac{2}{\theta_0}n \int
\left\{u\in\bar{D}_n\right\}
\Bigg\{
&\left[ \hoMLE(u)-\theta_0 \right]^2\\
&- \left[ \hoMLE^0(u)-\theta_0 \right]^2 
\Bigg\}
\left( \Phi_n(\hb,u)-\Phi(\beta_0,u)\right)\, \mathrm{d}u,
\end{split}
\]
and

\[
\bar{R}_{n2}=\frac{1}{\theta_0^2}n \int
\left\{u\in\bar{D}_n\right\}
\left\{\left[ \hoMLE(u)-\theta_0 \right]^2- \left[ \hoMLE^0(u)-\theta_0 \right]^2 \right\}\, \mathrm{d}\left(V_n(u)-V(u)\right).
\]
The aim is to show that $\bar{R}_{n1}$ and $\bar{R}_{n2}$, and thus $\bar{R}_n$ is $o_p(1)$. The term $\bar{R}_{n1}$ can be written as

\[
\begin{split}
\frac{2}{\theta_0}n^{1/3} \int
\left\{u\in\bar{D}_n\right\}
\Bigg\{
&\left[ n^{1/3}\left(\hoMLE(u)-\theta_0\right) \right]^2\\
&- \left[ n^{1/3}\left(\hoMLE^0(u)-\theta_0 \right)\right]^2 
\Bigg\}
\left(\Phi_n(\hb,u)-\Phi(\beta_0,u)\right)\, \mathrm{d}u.
\end{split}
\]
Lemma 4 in~\cite{lopuhaa_nane1} provides that

\[
\sup_{x \in \mathbb{R}}
\left|\Phi_n(\hb,x)-\Phi(\beta_0,x)\right|\to 0,
\]
with probability one. From Lemma~\ref{lemma:X_n_Y_n} and since $\int\{u\in\bar{D}_n\}\,\mathrm{d}u\leq 2k_{\varepsilon}n^{-1/3}$, by Lemma~\ref{relation:D_n} and by using similar arguments as for the term $R_{n,1}$, we can conclude that $\bar{R}_{n1}$ is $o_p(1)$. Analogously,

\[
\begin{split}
\bar{R}_{n2}=\frac{1}{\theta_0^2}n^{1/3} \int
\left\{u\in\bar{D}_n\right\}
\Bigg\{
&\left[ n^{1/3}\left(\hoMLE(u)-\theta_0\right) \right]^2\\
&- \left[ n^{1/3}\left(\hoMLE^0(u)-\theta_0 \right)\right]^2 
\Bigg\}
\delta \, \mathrm{d}(P_n-P)(u,\delta,z).
\end{split}
\]
Once more, by Lemma~\ref{relation:D_n} and Lemma~\ref{lemma:X_n_Y_n}, there exists $M_2>0$ such that

\[
|\bar{R}_{n2}|\leq \frac{M_2^2}{\theta_0^2}n^{1/3}\int\delta \left\{u\in\bar{D}_n\right\}\, \mathrm{d}(P_n-P)(u,\delta,z),
\]
with arbitrarily large probability.
%
Chebyshev's inequality along with the same reasoning as for the term $R_{n,1}$ provides that $\bar{R}_{n2}=o_p(1)$.
Hence,

\[
\begin{split}
2\log \xi_n(\theta_0)=&\frac{2}{\theta_0}n \int
\left\{u\in\bar{D}_n\right\}
\left\{\left[ \hoMLE(u)-\theta_0 \right]^2- \left[ \hoMLE^0(u)-\theta_0 \right]^2 \right\}\Phi(\beta_0,u) \,\mathrm{d}(u)\\
&-\frac{1}{\theta_0^2}n\int
\left\{u\in\bar{D}_n\right\}
\left\{\left[ \hoMLE(u)-\theta_0 \right]^2- \left[ \hoMLE^0(u)-\theta_0 \right]^2 \right\}\, \mathrm{d}V(u)+o_p(1).
\end{split}
\]
Consider the change of variable $x=n^{1/3}(u-x_0)$ and let $\widetilde{D}_n=n^{1/3}\left( \bar{D}_n-x_0 \right)$. This yields that

\[
\begin{split}
2\log \xi_n(\theta_0)
=&\frac{2}{\theta_0} \int
\left\{x\in\widetilde{D}_n\right\}
\left[X_n^2(x)- Y_n^2(x) \right] \Phi(\beta_0,x_0+n^{-1/3}x)\, \mathrm{d}x\\
&-\frac{1}{\theta_0^2}\int
 \left\{x\in\widetilde{D}_n\right\}
\left[ X_n^2(x)- Y_n^2(x)^2 \right] V'(x_0+n^{-1/3}x)\,\mathrm{d}x+o_p(1)\\
=&\frac{2}{\theta_0} \Phi(\beta_0,x_0)\int \left\{x\in\widetilde{D}_n\right\} \left[X_n^2(x)- Y_n^2(x) \right] \, \mathrm{d}x\\
&-\frac{1}{\theta_0^2}V'(x_0)\int \left\{x\in\widetilde{D}_n\right\} \left[ X_n^2(x)- Y_n^2(x) \right] \,\mathrm{d}x+o_p(1).
\end{split}
\]
As inferred in~\cite{lopuhaa_nane1},

\[
\lambda_0(x)=\frac{\mathrm{d}V(x)/\mathrm{d}x}{\Phi(\beta_0,x)},
\]
which gives that

\[
2\log \xi_n(\theta_0)=\frac{1}{\theta_0} \Phi(\beta_0,x_0)\int \left\{x\in\widetilde{D}_n\right\} \left[X_n^2(x)- Y_n^2(x) \right] \, \mathrm{d}x+o_p(1).
\]
Thus

\[
2\log \xi_n(\theta_0)=\frac{1}{a^2} \int \left\{x\in\widetilde{D}_n\right\} \left[X_n^2(x)- Y_n^2(x) \right] \, \mathrm{d}x+o_p(1),
\]
where $a$ has been defined in~\eqref{def:a_b}. From Lemma~\ref{relation:D_n}, for every $\varepsilon>0$, we can find an interval $[-k_\varepsilon,k_\varepsilon]$ such that $\mathbbm{P}(\widetilde{D}_n\subset[-k_\varepsilon,k_\varepsilon])>1-\varepsilon$, for $n$ sufficiently large. In order to prove the theorem, we apply Lemma 4.2 in~\cite{prakasarao:1969}, by taking

\[
\begin{split}
Q_n=&\frac{1}{a^2}\int \left\{x\in\widetilde{D}_n\right\} \left[X_n^2(x)- Y_n^2(x) \right] \, \mathrm{d}x,\\
Q_{n\varepsilon}=&\frac{1}{a^2}\int \left\{x\in[-k_\varepsilon,k_\varepsilon]\right\} \left[X_n^2(x)- Y_n^2(x) \right] \, \mathrm{d}x,\\
Q_{\varepsilon}=&\frac{1}{a^2}\int \left\{x\in[-k_\varepsilon,k_\varepsilon]\right\} \left[(g_{a,b}(x))^2- \left(g_{a,b}^0(x)\right)^2 \right] \, \mathrm{d}x,
\end{split}
\]
and
\[
Q=\frac{1}{a^2}\int \left\{x\in D_{a,b}\right\} \left[(g_{a,b}(x))^2- \left(g_{a,b}^0(x)\right)^2 \right] \, \mathrm{d}x,
\]
where $D_{a,b}$ denotes the set on which $g_{a,b}$ and $g_{a,b}^0$ differ. Condition (i) in Lemma 4.2 of Prakasa Rao follows by Lemma \ref{relation:D_n}. In addition, Lemma \ref{relation:D_n} and Lemma \ref{lemma:distr_processes} yield condition (ii), since for every $\varepsilon>0$, we can find $k_\varepsilon>0$ such that $\mathbbm{P}(D_{a,b}\subset[-k_\varepsilon,k_\varepsilon])>1-\varepsilon$. The third condition follows, for every fixed $\varepsilon$, by Lemma~\ref{lemma:distr_processes} and by continuous mapping theorem. Namely, $(X_n,Y_n)\Rightarrow (g_{a,b},g_{a,b}^0)$ as a process in $\mathcal{L}\times\mathcal{L}$ and $(f,g)\mapsto\int\{x\in[-c,c]\}(f^2(x)-g^2(x))\, \mathrm{d}x$ is a continuous function defined on $\mathcal{L}\times\mathcal{L}$ with values in $\mathbbm{R}$. Conclusively,

\[
\begin{split}
\frac{1}{a^2}\int \left[X_n^2(x)- Y_n^2(x) \right] &\left\{x\in\widetilde{D}_n\right\}\, \mathrm{d}x\\
&\law
\frac{1}{a^2} \int \left[(g_{a,b}(x))^2- \left(g_{a,b}^0(x)\right)^2 \right] \left\{x\in D_{a,b}\right\}\, \mathrm{d}x,\\
&\eqd \int \left[(g_{1,1}(x))^2- \left(g_{1,1}^0(x)\right)^2 \right] \left\{x\in D_{1,1}\right\}\, \mathrm{d}x,
\end{split}
\]
by continuous mapping theorem and by Brownian scaling, as derived in~\cite{banerjeewellner:2001}. This completes the proof.
\end{proof}

The asymptotic distribution of the likelihood ratio statistic in the nonincreasing baseline hazard setting can be derived completely analogous.

\begin{theorem}
Suppose (A1) and (A2) hold and let $x_0\in(0,\tau_H)$. Assume that $\lambda_0$ is nonincresing on $[0,\infty)$ and continuously differentiable in a neighborhood of $x_0$, with $\lambda_0(x_0)\neq0$ and $\lambda_0'(x_0)<0$. Moreover, assume that $H^{uc}(x)$ and $x\to\Phi(\beta_0,x)$, defined in~\eqref{def:Phi} and above~\eqref{def:Phi}, are continuously differentiable in a neighborhood of $x_0$. Let $2\log \xi_n(\theta_0)$ be the likelihood ratio statistic for testing $H_0:\lambda_0(x_0)=\theta_0$, as defined in~\eqref{likelihood_ratio}. Then,

\[
2\log \xi_n(\theta_0)\law \mathbbm{D}.
\]
\end{theorem}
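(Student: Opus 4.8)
The plan is to imitate, step for step, the proof of Theorem~\ref{theorem_inc}, making only the substitutions dictated by the nonincreasing geometry: the reduced loglikelihood~\eqref{likelihood_inc_reduced} is replaced by its nonincreasing counterpart~\eqref{likelihood_decreasing} (together with the constrained reduction displayed just above Lemma~\ref{lemma:characterization_constrained_dec}); the block characterisations of Lemmas~\ref{lemma:characterization_unconstrained} and~\ref{lemma:characterization_constrained} are replaced by those of Lemmas~\ref{lemma:characterization_unconstrained_dec} and~\ref{lemma:characterization_constrained_dec}; and the joint weak convergence of Lemma~\ref{lemma:distr_processes} is replaced by that of Lemma~\ref{lemma:distr_processes_dec}. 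One first notes that the two auxiliary facts used throughout the proof of Theorem~\ref{theorem_inc}, namely the localisation of the difference set (Lemma~\ref{relation:D_n}) and the uniform boundedness of $X_n$ and $Y_n$ on compacta (Lemma~\ref{lemma:X_n_Y_n}), carry over verbatim to the nonincreasing case, since their proofs use only the monotonicity of the processes and the weak convergence provided here by Lemma~\ref{lemma:distr_processes_dec}.

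Writing $2\log\xi_n(\theta_0)=2L_{\hb}(\hoMLE)-2L_{\hb}(\hoMLE^0)$ from~\eqref{likelihood_decreasing} and its constrained analogue, and letting $D_n$ be the set of indices on which $\hoMLE(T_{(i)})$ and $\hoMLE^0(T_{(i)})$ disagree, a third order Taylor expansion of the logarithmic terms around $\lambda_0(x_0)=\theta_0$ yields $2\log\xi_n(\theta_0)=A_n-B_n+R_n$, with $A_n$ collecting the linear-in-difference terms, $B_n$ the quadratic ones, and $R_n$ a cubic remainder. Exactly as before, $R_n=o_p(1)$ by Lemmas~\ref{relation:D_n} and~\ref{lemma:X_n_Y_n}, Chebyshev's inequality and the assumed differentiability of $H^{uc}$ near $x_0$. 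Decomposing $A_n=A_{n1}-A_{n2}$ and inserting the explicit block values $v_{nj}(\beta)$ and $v_{nj}^0(\beta)$ from Lemmas~\ref{lemma:characterization_unconstrained_dec} and~\ref{lemma:characterization_constrained_dec} --- the only bookkeeping changes being the index shift ($T_{(i)}-T_{(i-1)}$ and $\sum_{l=i}^n$ in place of $T_{(i+1)}-T_{(i)}$ and $\sum_{l=i+1}^n$) and the fact that the block playing the distinguished role now contains $m+1$ rather than $m$ --- one rewrites, with $\Phi_n$ as in~\eqref{def:Phi_n} and $\bar V_n$ the empirical sub-distribution function of the uncensored observations,
\[
A_n=\frac{2}{\theta_0}\,n\int\{u\in\bar D_n\}\Bigl\{[\hoMLE(u)-\theta_0]^2-[\hoMLE^0(u)-\theta_0]^2\Bigr\}\Phi_n(\hb,u)\,\mathrm{d}u,
\]
\[
B_n=\frac{1}{\theta_0^2}\,n\int\{u\in\bar D_n\}\Bigl\{[\hoMLE(u)-\theta_0]^2-[\hoMLE^0(u)-\theta_0]^2\Bigr\}\,\mathrm{d}\bar V_n(u).
\]
Replacing $\Phi_n(\hb,\cdot)$ by $\Phi(\beta_0,\cdot)$ (Lemma~4 in~\cite{lopuhaa_nane1}) and $\bar V_n$ by its limit $\bar V=H^{uc}$, the errors are $o_p(1)$ by the argument used for $\bar R_{n1},\bar R_{n2}$ in Theorem~\ref{theorem_inc}. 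The substitution $x=n^{1/3}(u-x_0)$, $\widetilde D_n=n^{1/3}(\bar D_n-x_0)$, the continuity at $x_0$ of $\Phi(\beta_0,\cdot)$ and of the derivative of $\bar V=H^{uc}$, together with the Cox-model identity $\lambda_0(x)\Phi(\beta_0,x)=\bar V'(x)$, then collapse everything as before to
\[
2\log\xi_n(\theta_0)=\frac{1}{a^2}\int\{x\in\widetilde D_n\}\bigl[X_n^2(x)-Y_n^2(x)\bigr]\,\mathrm{d}x+o_p(1),
\]
with $a$ as in~\eqref{def:a_b}.

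Next, Lemma~4.2 in~\cite{prakasarao:1969} is applied with $Q_n,Q_{n\varepsilon},Q_\varepsilon,Q$ defined as in the proof of Theorem~\ref{theorem_inc} but with the pair $(g_{a,b},g_{a,b}^0)$ replaced by $(l_{a,b},l_{a,b}^0)$ of~\eqref{def:l_a,b} and~\eqref{def:l_a,b^0}: condition (i) follows from Lemma~\ref{relation:D_n}, condition (ii) from Lemmas~\ref{relation:D_n} and~\ref{lemma:distr_processes_dec}, and condition (iii) from Lemma~\ref{lemma:distr_processes_dec} and the continuous mapping theorem applied to $(f,g)\mapsto\int\{x\in[-c,c]\}(f^2(x)-g^2(x))\,\mathrm{d}x$ on $\mathcal{L}\times\mathcal{L}$. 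Hence $2\log\xi_n(\theta_0)$ converges in distribution to $a^{-2}\int\{x\in D^{\ell}_{a,b}\}\bigl[(l_{a,b}(x))^2-(l_{a,b}^0(x))^2\bigr]\,\mathrm{d}x$, where $D^{\ell}_{a,b}$ is the set on which $l_{a,b}$ and $l_{a,b}^0$ differ, and Brownian scaling reduces this to the case $a=b=1$. It remains to identify this with $\mathbbm{D}$, i.e.\ to show that the nonincreasing limit coincides with the nondecreasing one of Theorem~\ref{theorem_inc}. Since $\mathbbm{W}\eqd-\mathbbm{W}$ as processes, $\bar X_{1,1}(t)=\mathbbm{W}(t)-t^2\eqd-(\mathbbm{W}(t)+t^2)=-X_{1,1}(t)$ jointly in $t$; the least concave majorant of $-X_{1,1}$ equals minus the greatest convex minorant of $X_{1,1}$, so its left slope at $t$ is $-g_{1,1}(t)$, giving $l_{1,1}\eqd-g_{1,1}$, and, because the truncations in~\eqref{def:l_a,b^0} and~\eqref{def:g_a,b^0} are interchanged by the sign flip ($\max(-s,0)=-\min(s,0)$), also $l_{1,1}^0\eqd-g_{1,1}^0$ jointly with $D^{\ell}_{1,1}\eqd D_{1,1}$. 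Therefore
\[
\frac{1}{a^2}\int\{x\in D^{\ell}_{1,1}\}\bigl[(l_{1,1}(x))^2-(l_{1,1}^0(x))^2\bigr]\,\mathrm{d}x\eqd\int\bigl[(g_{1,1}(x))^2-(g_{1,1}^0(x))^2\bigr]\,\mathrm{d}x=\mathbbm{D},
\]
which is the assertion. The main obstacle is organisational rather than conceptual: one must push the heavier bookkeeping of the nonincreasing characterisations (least concave majorants, intervals $(T_{(i-1)},T_{(i)}]$, the distinguished block containing $m+1$) through the block manipulations; the one genuinely new ingredient is the Brownian sign-reversal identity above, which is precisely what forces the nonincreasing limit to agree with the nondecreasing one.
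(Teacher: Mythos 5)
Your proposal is correct and follows essentially the same route as the paper: rerun the Theorem~\ref{theorem_inc} machinery with the nonincreasing characterizations to land on the $(l_{a,b},l_{a,b}^0)$ integral, then use $\text{slolcm}(\bar X_{a,b})=-\text{slogcm}(-\bar X_{a,b})$ together with $-\mathbbm{W}\eqd\mathbbm{W}$ to identify the limit with $\mathbbm{D}$. If anything, you are slightly more careful than the paper in stating the sign-reversal identity jointly in $t$ (which is what the continuous mapping step actually requires), modulo a cosmetic slip in your last display where the factor $1/a^2$ survives alongside the already-rescaled $(1,1)$ subscripts.
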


\begin{proof}
Following the same reasoning as in the proof of Theorem~\ref{theorem_inc} and by Lemma~\ref{lemma:distr_processes_dec}, it can be deduced that

\[
2\log \xi_n(\theta_0)
\law
\frac{1}{a^2}\int \left[(l_{a,b}(x))^2- \left(l_{a,b}^0(x)\right)^2 \right] \left\{x\in \bar{D}_{a,b}\right\}\, \mathrm{d}x,
\]
where $\bar{D}_{a,b}$ is the set on which $l_{a,b}$ and $l_{a,b}^0$ differ. By continuous mapping theorem, it suffices to show that, for $t$ fixed, $l_{a,b}(\bar{X}_{a,b})(t)$ has the same distribution as $g_{a,b}(X_{a,b})(t)$ and $l_{a,b}^{0}(\bar{X}_{a,b})(t)$ has the same distribution as $g_{a,b}^{0}(X_{a,b})(t)$. It is noteworthy that

\[
\text{slolcm}(\bar{X}_{a,b})(t)=-\text{slogcm}(-\bar{X}_{a,b})(t).
\]
Thus, by Brownian motion properties and continuous mapping theorem,

\[
\begin{split}
P\left(
l_{a,b}(t)\leq z
\right)
&=
P\left(
-\text{slogmc}
(-a\mathbbm{W}(t)+t^2)
\leq z
\right)\\
&=
P\left(
-\text{slogmc}
(a\mathbbm{W}(t)+t^2)
\leq z
\right)
=
P\left(
-g_{a,b}(t)\leq z
\right).
\end{split}
\]
Concluding, $l_{a,b}(\bar{X}_{a,b})(t)\eqd -g_{a,b}(X_{a,b})(t)$, and a similar reasoning can be applied to show that $l_{a,b}^0(\bar{X}_{a,b})(t)\eqd -g_{a,b}^0(X_{a,b})(t)$. The proof is then immediate, by continuous mapping theorem.
\end{proof}

\begin{remark}
The same limiting distribution $\mathbbm{D}$ is obtained for the loglikleihood ratio statistic in the absence of covariates in~\cite{banerjee:2008}, as well as in other censoring frameworks, as derived in~\cite{banerjeewellner:2001}. In fact, it has been shown in~\cite{banerjee:2007} that the same holds true for a wide class of monotone response models. This distribution differs from the usual $\chi_1^2$ distribution, that is obtained in the regular parametric setting. It is noteworthy that $\mathbbm{D}$ does not depend on any of the parameters of the underlying model, and this property turns out to be particularly useful in constructing confidence intervals for the parameters of interest, as it will be exposed in the subsequent section.
\end{remark}

\section{Pointwise confidence intervals via simulations}
\label{sec:simulations}
Once having derived the asymptotic distribution of the likelihood ratio statistic, the practical application at hand is to construct, for fixed $x_0\in(0,\tau_H)$, pointwise confidence intervals. We will derive such intervals, for a nondecreasing baseline hazard function $\lambda_0$, evaluated at a fixed point $x_0$, based on simulated data and compare these intervals with the intervals based on the asymptotic distribution of the nondecreasing NPMLE $\hoMLE$. According to Theorem 2 in~\cite{lopuhaa_nane1}, for fixed $x_0$,

\[
\begin{split}
n^{1/3}\left (
\hoMLE(x_0)-\lambda_0(x_0)
\right)
\law&
\left(
\frac{4 \lambda_0(x_0)\lambda_0'(x_0)}{\Phi(\beta_0,x_0)}
\right)^{1/3}
\argmin_{x\in\mathbbm{R}}\{\mathbbm{W}(t)+t^2\}\\
&\equiv
C(x_0) \mathbbm{Z},
\end{split}
\]
where $\mathbbm{W}$ is standard two-sided Brownian motion starting from zero, and the constant 
$C(x_0)$ depends on $x_0$ and on the underlying parameters. An estimator $\hat{C}_n(x_0)$ of $C(x_0)$ will then yield an $1-\alpha$ confidence interval for $\lambda_0(x_0)$

\[
C_{n,\alpha}^1\equiv \left[
\hoMLE(x_0)-n^{-1/3} \hat{C}_n(x_0)q(\mathbbm{Z},1-\alpha/2),
\hoMLE(x_0)+n^{-1/3} \hat{C}_n(x_0)q(\mathbbm{Z},1-\alpha/2)
\right],
\]
where $q(\mathbbm{Z},1-\alpha/2)$ is the $(1-\alpha/2)^{th}$ quantile of the distribution $\mathbbm{Z}$. These quantiles have been computed in~\cite{groeneboomwellner:2001}, and we will further use $q(\mathbbm{Z},0.975)=0.998181$.
For simulation purposes, we propose

\[
\hat{C}_n(x_0)=\left(
\frac{4 \hoMLE(x_0)\hoMLE'(x_0)}{\Phi_n(\hb,x_0)}
\right)^{1/3},
\]
where $\Phi_n(\beta,x)$ has been defined in~\eqref{def:Phi_n}, and $\hb$ is the maximum partial likelihood estimator. Lemma 4 in~\cite{lopuhaa_nane1} ensures that $\Phi_n(\hb,\cdot)$ is a strong uniform consistent estimator of $\Phi(\beta_0,\cdot)$. Furthermore, as an estimate for $\lambda_0'(x_0)$, we choose the numerical derivative of $\hoMLE$ on the interval that contains $x_0$, that is, the slope of the segment $[\hoMLE(T_{(m)}),\hoMLE(T_{(m+1)})]$.

Pointwise confidence intervals for $\lambda_0(x_0)$ can also be constructed by making use of Theorem~\ref{theorem_inc}. Let $2\log\xi_n(\theta)$ denote the likelihood ratio for testing $H_0:\lambda_0(x_0)=\theta$ versus $H_1:\lambda_0(x_0)\neq\theta$. A $1-\alpha$ confidence interval is then obtained by inverting the likelihood ratio test $2\log\xi_n(\theta)$ for different values of $\theta$, namely

\[
C_{n,\alpha}^2\equiv
\left\{
\theta: 2\log \xi_n(\theta)\leq q(\mathbbm{D},1-\alpha)
\right\},
\]
where $q(\mathbbm{D},1-\alpha)$ is the $(1-\alpha)^{th}$ quantile of the distribution $\mathbbm{D}$. Quantiles of $\mathbbm{D}$, based on discrete approximations of Brownian motion, are provided in~\cite{banerjeewellner:2005}, and we will make use of $q(\mathbbm{D},0.95)=2.286922$. The parameter $\theta$ is chosen to take values on a fine grid between $0$ and $6$. It can be shown immediately that, for large enough $n$, the coverage probability of $C_{n,\alpha}^2$ is approximately $1-\alpha$.

For the performance analysis, we have constructed and compared, from simulated data, the confidence intervals $C_{n,\alpha}^1$ and $C_{n,\alpha}^2$, for $\alpha=0.05$ and various $n$. We will assume a Weibull baseline distribution function for the event times, with shape parameter $2$ and scale parameter $1$.
For simplicity, we will assume that the covariate is single-valued and uniformly $(0,1)$ distributed and take $\beta_0=0.5$. Given the covariate, the censoring times are assumed to be uniformly $(0,1)$ distributed. We will choose $x_0=\sqrt{\log2}$, the median of the baseline distribution of the event times. For each chosen sample size, we generate $1000$ replicates and compute the empirical coverage and the average length of the corresponding confidence intervals. Furthermore, since we are simulating from a Weibull distribution with shape parameter $2$ and scale parameter $1$, and hence know the true baseline hazard function $\lambda_0$ and its derivative, as well as the true underlying regression coefficient, we could also consider a confidence interval $\bar{C}_{n,\alpha}^1$, given by

\[
\bar{C}_{n,\alpha}^1\equiv \left[
\hoMLE(x_0)- n^{-1/3} C_0(x_0)q(\mathbbm{Z},1-\alpha/2),
\hoMLE(x_0)+n^{-1/3} C_0(x_0)q(\mathbbm{Z},1-\alpha/2)
\right],
\]
where $C_0$ is a deterministic function given by

\[
C_0(x_0)=
\left(
\frac{4v\lambda_0(x_0)\lambda_0'(x_0)}{\Phi(\beta_0,x_0)}
\right)^{1/3}.
\]

Table~\ref{table:sim} reveals the performance, for various sample sizes, of the confidence interval $C_{n,0.05}^2$ based on the likelihood ratio method (LR), the confidence interval $C_{n,0.05}^1$, based on the asymptotic distribution (AD) of the scaled differences between the NPMLE $\hoMLE$ and the true baseline hazard at a fixed point, as well as the confidence interval $\bar{C}_{n,0.05}^1$ based on the known Weibull distribution (TD).

\begin{table}[ht]
\centering
\begin{tabular}{|c|c|c|c|c|c|c|}
  \hline
  &\multicolumn{2}{c|}{LR}&\multicolumn{2}{c|}{AD}&\multicolumn{2}{c|}{TD}\\[0.5ex]
  \hline
  n & AL & CP & AL & CP & AL & CP \\ [0.5ex] \hline
  50 & 4.275  & 0.917 & 5.203 & 0.932 & 1.506 & 0.964 \\ \hline
  100 & 3.837 & 0.923 & 4.838 & 0.941 & 1.317 & 0.953 \\ \hline
  200 & 3.009 & 0.931 & 4.605 & 0.947 & 1.247 & 0.947 \\ \hline
  500 & 2.734 & 0.947 & 3.372 & 0.948 & 0.961 & 0.964 \\ \hline
  1000 & 1.454 & 0.942 & 2.259 & 0.940 & 0.713 & 0.957 \\ \hline
  5000 & 0.879 & 0.945 & 1.768 & 0.952 & 0.546 & 0.953 \\ \hline
\end{tabular}
\caption{Simulaton results for constructing $95\%$ pointwise confidence intervals using the likelihood ratio $C_{n,0.05}^2$ (LR) or the asymptotic distribution of the NPMLE estimator $C_{n,0.05}^1$ (AD) and $\bar{C}_{n,0.05}^1$ (TD), in terms of average length (AL) and empirical coverage (CP).}
\label{table:sim}
\end{table}

\newpage

It is noteworthy that for each sample size, the likelihood ratio method yields, on average, shorter pointwise confidence intervals in comparison with the confidence intervals based on the asymptotic distribution of the NPMLE estimator $\hoMLE$. Moreover, the confidence intervals based on the likelihood ratio exhibit comparable coverage probabilities with the confidence intervals $C_{n,0.05}^2$, based on the asymptotic distribution. As expected, the highest coverage rate is attained by the confidence intervals $\bar{C}_{n,0.05}^1$. Furthermore, they also yield confidence intervals with the shortest length, on average.


\pagebreak
\bibliographystyle{acm}
\bibliography{paper3}
\end{document}